\documentclass[12pt]{article}
\usepackage{a4wide}
\usepackage{amsmath,amssymb,amsthm}
\usepackage[mathscr]{eucal}
\usepackage{graphics}

\usepackage{epsfig}
\usepackage{psfrag}

\newcommand{\xo}{X^\omega}
\newcommand{\nucl}{\mathcal{N}}
\newcommand{\gr}{\Gamma}
\newcommand{\B}{\mathsf{B}}

\newcommand{\A}{\mathsf{A}}
\newcommand{\pol}[1][0]{{\rm Pol}({#1})}
\newcommand{\diam}{\textrm{diam}\,}

\author{Ievgen~V.~Bondarenko}
\title{\textbf{Growth of Schreier graphs of automaton groups}}

\sloppy \frenchspacing

\newtheorem{theorem}{Theorem}

\newtheorem{corollary}[theorem]{Corollary}
\newtheorem{lemma}{Lemma}
\theoremstyle{definition}

\newtheorem{remark}{Remark}

\begin{document}

\maketitle

\begin{abstract}
Every automaton group naturally acts on the space $\xo$ of infinite sequences over some
alphabet $X$. For every $w\in\xo$ we consider the Schreier graph $\gr_{w}$ of the
action of the group on the orbit of $w$. We prove that for a large class of automaton
groups all Schreier graphs $\gr_w$ have subexponential growth bounded above by
$n^{(\log n)^m}$ with some constant $m$. In particular, this holds for all groups
generated by automata with polynomial activity growth (in terms of S.~Sidki),
confirming a conjecture of V.~Nekrashevych. We present applications to
$\omega$-periodic graphs and Hanoi graphs.\\

\noindent \textbf{Mathematics Subject Classification 2000}: 20F65, 05C25, 20F69

\vspace{0.1cm}\noindent \textbf{Keywords}: automaton group, Schreier graph,
subexponential growth.
\end{abstract}

\section{Introduction}

Let $G$ be a group with a generating set $S$ and acting on a set $X$. The (simplicial) Schreier graph $\gr(G,S,X)$ of the
action $(G,X)$ is the graph with the set of vertices $X$, where two vertices $x$ and $y$ are adjacent if and only if
there exists $s\in S\cup S^{-1}$ such that $s(x)=y$. Schreier graphs are generalizations of the Cayley graph of a group,
which corresponds to the action of a group on itself by the left multiplication.

The growth of Cayley graphs (growth of groups) is one of the central objects of study in geometric group theory. The
celebrated theorem of M.~Gromov characterizes groups of polynomial growth. Many classical groups (like linear groups,
solvable groups, hyperbolic groups, etc.) have either polynomial or exponential growth. The first group of intermediate
growth between polynomial and exponential was constructed by R.~Grigorchuk \cite{gri:Milnor}. Nowadays, the Grigorchuk
group is the best known example of automaton groups (also known as self-similar groups, or groups generated by automata).

In this paper, we consider the growth of Schreier graphs of automaton groups. Every automaton group $G$ generated by an
automaton $\A$ over some alphabet $X$ naturally acts on the set $X^n$ for every $n\geq 1$ and on the space $\xo$ of
right-infinite sequences over $X$. We get the associated sequence of finite Schreier graphs $\gr_n(G,\A)$ of the action
$(G,X^n)$ and the family of orbital Schreier graphs $\gr_w(G,\A)$ for $w\in\xo$ of the action of $G$ on the orbit of $w$.
The Schreier graphs $\gr_n$ and $\gr_w$ provide a large source of self-similar graphs and were studied in relation to
such topics as spectrum, growth, amenability, topology of Julia sets, etc. (see
\cite{Hecke,PhDBondarenko,gri_sunik:hanoi,gri_sunik:spectrum,amen_actions,ddmn:GraphsBasilica}).

It was noticed in \cite{Hecke} that for a few automaton groups the sequence of Schreier graphs $\gr_n$  converges to a
nice metric space with fractal structure. Further, this observation lead to the notion of a limit space of an automaton
group, and, more generally, to a limit dynamical system \cite{self_sim_groups}. The expanding property of this dynamical
system corresponds to the contracting property of a group, what brings us to the important class of contracting automaton
groups. Namely the strong contracting properties of the Grigorchuk group were used to prove that it has intermediate
growth, while in general a contracting group may have exponential growth. At the same time, all orbital Schreier graphs
$\gr_w$ of contracting groups have polynomial growth \cite{Hecke,self_sim_groups}. The degree of the growth is related to
the asymptotic characteristics of the group such as the complexity of the word problem, the Hausdorff dimension of the
limit space, the exponent of divergence of geodesics in the associated Gromov-hyperbolic self-similarity complex, etc.
(see \cite{fractal_gr_sets,PhDBondarenko}).

Most of the studied automaton groups are generated by polynomial automata. These automata were introduced by S.~Sidki in
\cite{sidki:circ}, who tried to classify automaton groups by the cyclic structure of the generating automaton. A finite
automaton is called polynomial if the simple directed cycles away from the trivial state are disjoint. The term
\textquotedblleft polynomial" comes from the equivalent definition, where a finite automaton is polynomial if the number
of paths of length $n$ avoiding the trivial state in the automaton  grows polynomially in~$n$.

It is an open problem whether contracting groups and groups generated by polynomial automata are amenable. However, it is
known that these groups do not contain non-abelian free subgroups \cite{free_subgroups,sidki:poly} (but may contain free
semigroups and be of exponential growth), and that groups generated by polynomial automata of degree $0$ and $1$ (bounded
and linear automata) are amenable~\cite{amenability,linear}. In \cite{free_subgroups} V.~Nekrashevych introduced a
general approach to the existence of free subgroups in automaton groups and applied it to contracting groups and to
groups generated by polynomial automata. {It was shown that there exists certain trichotomy for groups acting on rooted
trees that involves the absence of non-abelian free subgroups as one of the three cases. In order to eliminate one of
these cases for groups generated by polynomial automata, V.~Nekrashevych proved that their orbital Schreier graphs
$\gr_w$ are amenable and conjectured \cite[page~858]{free_subgroups} that these graphs have subexponential growth. This
conjecture was based on the results from \cite{SchreierInter,omega_periodic}, where it is shown that for the group
generated by one of the simplest polynomial automata all Schreier graphs $\gr_w$ have intermediate growth. The main goal
of this paper is to prove this conjecture.

\begin{theorem}\label{theorem_main}
Let $G$ be a group generated by a polynomial automaton of degree $m$. There exists a constant $A$ such that all orbital
Schreier graphs $\gr_w(G)$ for $w\in\xo$ have subexponential growth not greater than $A^{(\log n)^{m+1}}$.
\end{theorem}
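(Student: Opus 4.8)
The plan is to prove the bound by induction on the degree $m$ of the polynomial automaton, reducing the growth of $\gr_w$ at scale $n$ to the growth of Schreier graphs of sections at smaller scales, where the accumulated cost per scale is governed by the activity and the inductive hypothesis. Throughout I work with the nucleus $\nucl$, so that every section $g|_v$ of a group element again lies in $\langle\nucl\rangle$ and the restriction $s|_v$ of a generator at a word $v\in X^k$ is either trivial or one of finitely many nucleus elements. Writing $\gamma_w(n)=|B_{\gr_w}(w,n)|$ for the growth function, the central structural input is the self-similar identity $g(uw'')=g(u)\cdot g|_u(w'')$ for $u\in X^k$ and $w=uw''$, together with the defining property of degree $m$: for each generator $s$ the number of words $v\in X^k$ with $s|_v\neq 1$ grows at most like $k^m$.

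First I would record the base case $m=0$ (bounded automata), where the bound $A^{(\log n)}=n^{\log A}$ asserts polynomial growth of all $\gr_w$ and serves as the anchor of the induction. For the inductive step, fix a scale parameter $k\asymp\log n$ and decompose each vertex $v=g(w)$ of the ball $B_{\gr_w}(w,n)$, realized by a group element $g$ of length $\le n$, into a prefix part $g(u)\in X^k$ and a tail part $g|_u(w'')$, where $u$ is the length-$k$ prefix of $w$. The prefix $g(u)$ ranges over at most $|X|^k$ values, and with $k\asymp\log n$ this contributes only a polynomial-in-$n$ factor $|X|^{k}\le n^{\log|X|}$, which is absorbed into the final $A^{(\log n)^{m+1}}$ estimate. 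Hence the essential task is to bound, for each fixed prefix behaviour, the number of attainable tails $g|_u(w'')$, that is, the size of a ball in the Schreier graph $\gr_{w''}$ of the section $g|_u$ acting on $w''$.

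The heart of the argument is the control of the sections $g|_u$. Expanding $g=s_1\cdots s_n$ as a product of generators gives $g|_u=s_1|_{u_1}\cdots s_n|_{u_n}$ along the length-$k$ prefixes $u_i$ encountered while moving $w$, and each factor $s_i|_{u_i}$ is a nucleus element that is trivial unless $u_i$ is active for $s_i$. The number of nontrivial factors, hence the word length of $g|_u$, is therefore controlled by the activity, and, crucially, by Sidki's disjoint-cycle structure the sections branching off the defining cycles have degree at most $m-1$. This lets me split the tail displacement into a part governed by a degree-$\le m-1$ subsystem, whose contribution is bounded by the inductive hypothesis $A^{(\log n)^{m}}$ applied at the local scale, and a part running along the cycles which keeps the prefix moving and is charged to the polynomial prefix factor. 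Iterating the prefix reduction across the $\asymp\log n$ successive scales obtained by peeling off blocks of length $k$, each scale contributes a factor at most $A^{(\log n)^{m}}$, so the product over $\asymp\log n$ scales yields $\log\gamma_w(n)\le(\log n)\cdot(\log n)^{m}\log A=(\log n)^{m+1}\log A$, uniformly in $w$.

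The main obstacle is exactly this bookkeeping of sections: showing that the degree genuinely drops to $m-1$ on the branches, that the word lengths of the surviving sections stay within the current scale so that the induction can be applied at the correct local radius, and that all constants can be chosen uniformly over $w\in\xo$ and over the finitely many nucleus elements. Making the per-scale cost precisely $(\log n)^{m}$, rather than something that would accumulate an extra factor and destroy the subexponential estimate, requires careful use of the polynomial, as opposed to merely subexponential, growth of the activity; this quantitative matching of the activity degree to the exponent $m+1$ is where the difficulty of the theorem is concentrated.
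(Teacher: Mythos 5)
Your overall frame---prefix--tail decomposition of the ball $B(w,n)$, induction on the degree $m$, with lower-degree pieces handled by the inductive hypothesis---is the same as the paper's, but the quantitative engine is missing, and the substitute you propose for it is incorrect. First, a group generated by a polynomial automaton of degree $m\geq 1$ is in general not contracting, so there is no nucleus $\nucl$ to work with; what one actually uses is a finite symmetric self-similar generating set. More seriously, your central claim that ``the number of nontrivial factors, hence the word length of $g|_u$, is therefore controlled by the activity'' is false. Polynomial activity growth bounds, for a \emph{single} generator $s$, the number of words $v\in X^k$ with $s|_v\neq e$; it says nothing about how many indices $i$ in an induced presentation $g|_u=s_1|_{u_1}s_2|_{u_2}\cdots s_n|_{u_n}$ have nontrivial $s_i|_{u_i}$, because each factor is evaluated at its own word $u_i$. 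Concretely, for a circuit generator $a_1$ with $a_1|_x=a_1$ (e.g.\ the state $a_1$ of the automaton $\A_0$ in Section~\ref{section_Examples}), the element $g=a_1^n$ satisfies $g|_{x^k}=g$ for every $k$: its sections retain full word length $n$ at arbitrary depth, however slowly the activity grows. Consequently, at your scale $k\asymp\log n$ there is no available dichotomy ``degree drops or contribution is negligible,'' and the per-scale factor $A^{(\log n)^m}$ that you multiply over $\asymp\log n$ scales is precisely the unproven point; note also that your scheme only ever looks at prefixes of total length $\asymp(\log n)^2$, which is strictly shorter than what turns out to be needed once $m\geq 2$.

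What the paper does instead is prove a weak contraction lemma (Lemma~\ref{lemma_weak_contr_poly}, in the sharper form Lemma~1*): after passing to a power of the alphabet so that every cycle among the generators becomes a loop, for every $g$ of length $n$ and every $v$ with $|v|\geq C(\log n)^{m+1}$ the section $g|_v$ is either \emph{circuit} (with $g|_v|_x=g|_v$ for some letter $x$) or lies in the subgroup $G_{m-1}$ generated by the lower-degree generators. The exponent $m+1$ arises there from a block decomposition $g=h_0g_1h_1g_2\cdots g_lh_l$ and $\asymp\log n$ halving steps, each of which costs $\asymp(\log n)^m$ levels---this is where the product structure you were aiming for actually lives. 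The growth estimate then uses a \emph{single} prefix of length $k\asymp(\log n)^{m+1}$, contributing the dominant factor $|X|^{C(\log n)^{m+1}}$, and the orbits of the circuit sections on the tail are bounded by a separate argument: an element with $g|_x=g$ moves an initial run of the letter $x$ rigidly and then acts through an element of the ball of radius $n$ in $G_{m-1}$, whose orbit the inductive hypothesis bounds by $|X|^{C_1(\log n)^m}$. Your sketch correctly identifies where the difficulty is concentrated, but the circuit elements---which are the actual obstruction, since they never contract---are waved off as ``charged to the polynomial prefix factor,'' and nothing in the proposal replaces Lemma~1*.
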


In order to prove Theorem~1, we establish certain weak contracting properties of groups generated by polynomial automata.
In particular, we prove that the word problem in these groups is solvable in subexponential time bounded above by
$B^{(\log n)^{m+1}}$ for some constant~$B$. We also prove Theorem~\ref{theorem_main} for a more general class of automata
(see Theorem~\ref{theorem_growth_Schr_general} in Section~\ref{Generalized Version}), which generalizes both polynomial
automata and contracting groups.

In Section~\ref{section_Examples} we apply Theorem~\ref{theorem_main} to construct automaton groups with Schreier graphs
$\gr_w$ of intermediate growth. In the first example we consider a class of automaton groups, whose Schreier graphs
$\gr_w$ are a generalized version of the $\omega$-periodic graph of intermediate growth studied in
\cite{omega_periodic,SchreierInter}. This example shows that the upper bound in Theorem~\ref{theorem_main} is
asymptotically optimal by providing an example of a polynomial automaton of degree $m$ for each $m\in\mathbb{N}$, whose
all orbital Schreier graphs $\gr_w$ for $w\in X^{\omega}$ have growth not less than $B^{(\log n)^{m+1}}$ for some
constant $B>1$. Another example comes from the well-known Hanoi Tower Game on $k$ pegs. This game was modeled by
automaton groups $G_{(k)}$ in \cite{gri_sunik:hanoi}, and, using known estimates on the complexity of the Hanoi Tower
Game, it was noticed that the orbital Schreier graph $\gr_{0^{\infty}}(G_{(k)})$ for $k\geq 4$ has intermediate growth.
The automata generating groups $G_{(k)}$ for $k\geq 4$ are not polynomial, but we apply similar arguments to prove that
all orbital Schreier graphs $\gr_w(G_{(k)})$ have intermediate growth.

\section{Automaton groups and their Schreier graphs}

In this section we recall all necessary definitions; see \cite{self_sim_groups} for a more detailed introduction.

\vspace{0.2cm}\textbf{Spaces of words.} Let $X$ be a finite alphabet with at least two letters. Denote by
$X^{*}=\{x_1x_2\ldots x_n | x_i\in X, n\geq 0\}$ the set of all finite words over $X$ (including the empty word denoted
$\emptyset$). The length of a word $v=x_1x_2\ldots x_n\in X^n$ is denoted by $|v| = n$. Let $X^{\omega}$ be the space of
all infinite sequences (words) $x_1x_2\ldots$, $x_i\in X$, with the product topology of discrete sets $X$. The space
$\xo$ is homeomorphic to the Cantor set, i.e., it is a compact totally disconnected metrizable topological space without
isolated points.

\vspace{0.2cm}\textbf{Automata.} An invertible \textit{automaton} $\A$ over the alphabet $X$ is the triple
$(S,\pi,\lambda)$, where $S$ is the set of states of the automaton, $\lambda:S\times X\rightarrow S$ is the transition
function, and $\pi:S\times X\rightarrow X$ is the output function such that for every $s\in S$ the map
$\pi(s,\cdot):X\rightarrow X$ is a permutation on $X$. All automata in the paper are invertible, and further we will omit
the term invertible. An automaton is \textit{finite} if the set of its states is finite. An automaton $\A$ is represented
by a directed labeled graph (Moore diagram), whose vertices are the states of $\A$ and for every state $s\in S$ and every
letter $x\in X$ there is an arrow $s\rightarrow \lambda(s,x)$ labeled by the pair $x|\pi(s,x)$. This diagram contains
complete information about the automaton, and we identify the automaton with its Moore diagram. The notation $\A$ is also
used for the state set of the automaton $\A$, so that one can talk about a state $s\in\A$. A subset $\B\subset\A$ (with
induced edges) is a \textit{subautomaton} of $\A$ if $\lambda(s,x)\in\B$ for every $s\in\B$ and $x\in X$.

\vspace{0.2cm}\textbf{Automaton groups.} For every state $s\in\A$ and every finite word $v=x_1x_2\ldots x_n\in X^{*}$
there exists a unique path in the automaton $\A$ starting at the state $s$ and labeled by $x_1|y_1$, $x_2|y_2$, \ldots,
$x_n|y_n$ for some $y_i\in X$. Then the word $y_1y_2\ldots y_n$ is called the image of $x_1x_2\ldots x_n$ under $s$, and
the end state of this path is denoted by $s|_v$. In other words, for every finite word $v\in X^{*}$ we define the
\textit{image $s(v)$ of $v$ under $s$} and the \textit{state} $s|_v$ recursively by the rules
\begin{align*}
s|_x=\lambda(s,x), \quad  s|_{xv}=s|_x|_v, \quad \mbox{ and } \quad s(x)=\pi(s,x), \quad  s(xv)=s(x)s|_x(v)
\end{align*}
for every $x\in X$ and $v\in X^{*}$. The action of states of $\A$ on the set $X^n$ can be given by the automaton
$\A^{(n)}$ obtained from $\A$ by passing to the power $X^n$ of the alphabet $X$. The automaton $\A^{(n)}$ is defined over
the alphabet $X^n$ and has the same states as $\A$, but the arrows are $s\rightarrow s|_v$ labeled by $v|s(v)$ for every
$s\in\A$ and $v\in X^n$.

In the same way we get an action of every state $s\in\A$ on the space $\xo$ by looking at the infinite paths in the
automaton. Since the automata are invertible, all transformations $s\in\A$ are also invertible. The group $G$ generated
by all states $s\in\A$ is called the \textit{automaton group} generated by $\A$. Since every state preserves the length
of words in its action on $X^{*}$, the states act by isometries on the space $\xo$, and every automaton group is a
subgroup of $\textrm{Iso}(\xo)$. The automaton groups generated by $\A$ and $\A^{(n)}$ coincide viewed as subgroups of
$\textrm{Iso}(\xo)$ with the natural identification $\xo=(X^n)^{\omega}$. All automata in the paper are supposed to be
minimized, i.e., different states of a given automaton act differently on~$\xo$. Hence we identify the states with the
respective transformations of~$\xo$.

An alternative approach is through self-similar actions. A faithful action of a group $G$ on the set $X^{*}\cup\xo$ is
called \textit{self-similar} if for every $g\in G$ and $v\in X^{*}$ there exist $u\in X^{|v|}$ and $h\in G$ such that
$g(vw)=uh(w)$ for all $w\in X^{*}\cup\xo$. The element $h$ is called the \textit{restriction (state) of $g$ at $v$} and
is denoted by $g|_v$. We are using left actions, i.e., $(g_1g_2)(v)=g_1(g_2(v))$, and hence the restrictions have the
property
\[
(g_1g_2)|_v=g_1|_{g_2(v)}g_2|_v\quad \mbox{ for any } g_1,g_2\in G \mbox{ and } v\in X^{*}.
\]
The \textit{complete automaton} $\A(G)$ of a self-similar action of the group $G$ is an automaton with the set of states
$G$ and with the arrows $g\rightarrow g|_x$ labeled by $x|g(x)$ for every $g\in G$ and $x\in X$.

An automaton group $G$ is called \textit{contracting} if there exists a finite subset $\nucl\subset G$ with the property
that for every $g\in G$ there exists $n\in\mathbb{N}$ such that $g|_v\in\nucl$ for all words $v\in X^{*}$ of length
$|v|\geq n$. The smallest set $\nucl$ with this property is called the \textit{nucleus} of the group. Notice that a
finitely generated contracting group can be generated by a finite automaton. A group $G$ generated by a finite automaton
is contracting if and only if the complete automaton $\A(G)$ of the group contains only finitely many simple directed
cycles. The nucleus is the union of these cycles together with all elements that can be reached following directed paths
from the cycles.

Throughout the paper by a \textit{cycle} in an automaton we mean a simple directed cycle. States that lie on cycles are
called \textit{circuit} states. An element $g$ of an automaton group $G$ is circuit if there exists a nonempty word $v\in
X^{*}$ such that $g|_v=g$.

\vspace{0.2cm}\textbf{Polynomial automata.} A cycle in an automaton is called trivial if it is a loop at the state acting
trivially on $\xo$ (trivial state, denoted $e$). A finite automaton $\A$ is called \textit{polynomial} if different
nontrivial cycles in $\A$ are disjoint. A polynomial automaton $\A$ is \textit{of degree $m$} if the largest number of
nontrivial cycles in $\A$ connected by a directed path is equal to $m+1$. A finite automaton $\A$ is polynomial (of
degree $\leq m$) if and only if the number of directed paths in $\A$ of length $n$ that do not pass through the trivial
state is bounded by a polynomial in $n$ (of degree $\leq m$). The set of all states (transformations of $\xo$) of all
polynomial automata of degree $\leq m$ forms a group $\pol[m]$ called the \textit{group of polynomial automata of degree}
$m$. Every finitely generated subgroup of $\pol[m]$ is a subgroup of some automaton group generated by a polynomial
automaton.

We classify the states of a polynomial automaton $\A$ as follows. A state $s\in\A$ is \textit{finitary} if there exists
$n\in\mathbb{N}$ such that $s|_v=e$ for all $v\in X^n$. The finitary states are precisely the elements of $\pol[-1]$. The
states from $\pol[m]\setminus\pol[m-1]$ are called \textit{polynomial of degree~$m$}. For every polynomial automaton $\A$
there exists $n$ such that for every $s\in\A$ and $v\in X^n$ the state $s|_v$ is either circuit or has degree less than
the degree of $s$. Notice that if $s\in\A$ is a nontrivial circuit state then for every $n$ there exists precisely one
word $v\in X^n$ such that $s$ and $s|_v$ have the same degree as polynomial states, and $s|_u$ for all $u\in X^n$, $u\neq
v$, has degree less than the degree of $s$.

\vspace{0.2cm}\textbf{Schreier graphs.} Let $G$ be an automaton group generated by a finite subset $S$. The
\textit{Schreier graph $\gr_n(S)=\gr_n(G,S)$} is the graph with the set of vertices $X^n$, where two vertices $v$ and $u$
are adjacent if there exists $s\in S\cup S^{-1}$ such that $s(v)=u$. The \textit{orbital Schreier graph
$\gr_w(S)=\gr_w(G,S)$} for $w\in\xo$ is the graph, whose vertex set is the orbit $G(w)$ and two vertices $v$ and $u$ are
adjacent if there exists $s\in S\cup S^{-1}$ such that $s(v)=u$. The orbital Schreier graphs $\gr_w(S)$ are precisely the
connected components of the Schreier graph $\gr(G,S,\xo)$ of the action $(G,\xo)$. Every orbital Schreier graph $\gr_w$
is a limit of the finite Schreier graphs $\gr_n$ in the local Gromov-Hausdorff topology on pointed graphs.

\vspace{0.2cm}\textbf{Growth of graphs.} Let $\gr$ be a locally finite connected graph. The \textit{growth function}
$\gamma_v(n)$ of $\gr$ with respect to its vertex $v$ is equal to the number of vertices in the closed ball $B(v,n)$ of
radius $n$ centered at $v$. There is a partial order on the growth functions. Given two functions
$f,g:\mathbb{N}\rightarrow\mathbb{N}$ we say that $f$ has \textit{growth} not greater than $g$ (denoted $f\prec g$) if
there exists a constant $C>0$ such that $f(n)\leq g(Cn)$ for all $n\in\mathbb{N}$. If $f\prec g$ and $g\prec f$ then $f$
and $g$ are called equivalent $f\sim g$ and have the same growth. Formally, by the growth we can understand the
equivalence class of a function. Then, for any two vertices of the graph $\gr$, the respective growth functions are
equivalent, and one can talk about the growth of $\gr$.

A graph $\gr$ has \textit{subexponential growth} if its growth function has growth not greater than the exponential
growth $a^n$ with $a>1$ and is not equivalent to it. The growth is \textit{superpolynomial} if it is greater than every
polynomial function, and the growth is \textit{intermediate} if it is superpolynomial and subexponential.

Let $G$ be a finitely generated group with finite generating set $S$. The \textit{length} $l(g)$ of $g\in G$ with respect
to $S$ is equal to the distance between the trivial element $e$ and $g$ in the Cayley graph $\gr(G,S)$, i.e., $l(e)=0$
and
\[
l(g)=l_S(g)=\min\{ n\, |\, g=s_1s_2\ldots s_n \mbox{ for } s_i\in S\cup S^{-1}\}.
\]
The growth function $\gamma$ of the group $G$ is the growth function of the Cayley graph $\gr(G,S)$ with respect to the
vertex $e$, i.e., $\gamma(n)$ is equal to the number of elements $g\in G$ of length $l(g)\leq n$. The growth functions of
the Cayley graphs $\gr(G,S_1)$ and $\gr(G,S_2)$ for any two finite generating sets $S_1$ and $S_2$ are equivalent.

The growth of Schreier graphs $\gr_w(G,S)$ of an automaton group $G$ also does not depend on the choice of a finite
generating set $S$ of the group. Working with automaton groups it is useful to assume that a generating set $S$ is
\textit{self-similar} (automaton), i.e., $s|_v\in S$ for every $s\in S$ and $v\in X^{*}$. For example, we will frequently
use the following observation. If there is a presentation $g=s_1s_2\ldots s_n$ for $s_i\in S$ then for every $v\in X^{*}$
we get the \textit{induced presentation}
\begin{equation}\label{eqn_induced_present}
g|_v=s_1|_{v_1}s_2|_{v_2}\ldots s_{n-1}|_{v_{n-1}} s_n|_{v_n},
\end{equation}
where $v_n=v$ and $v_{i-1}=s_i(v_i)$ for $i=n,n-1,\ldots, 2$. In particular, if $S$ is self-similar then $s_i|_{v_i}\in
S$ and  $l(g|_v)\leq l(g)$. Also, it is usually assumed that $S$ is \textit{symmetric}, i.e., $S=S^{-1}$.

All logarithms in the paper are with base $2$, except if directly indicated. Usually, a logarithm appears as $C\log n$,
and the base can be hidden in the constant $C$. Also (to avoid some multiple brackets) we use the convention that $\log 0
=1$ and $\log 1 =1$ so that $\log n>0$ for all $n\in\mathbb{N}\cup \{0\}$, otherwise one can just replace $\log n$ by
$\log(n+2)$.

\section{Proof of Theorem~1}\label{Proof}

Let us recall how to prove that for contracting groups the Schreier graphs $\gr_w$ have polynomial growth (see
\cite[Section~2.13.4]{self_sim_groups}, \cite{Hecke}). Let $G$ be a finitely generated contracting group with nucleus
$\nucl$, and let $S$ be a finite symmetric self-similar generating set of $G$ that contains $\nucl$. We can choose a
constant $C$ such that $(s_1s_2)|_v\in\nucl$ for all $s_1,s_2\in S$ and every word $v\in X^{*}$ of length $|v|\geq C$.
Consider an element $g\in G$ and let $g=s_1s_2\ldots s_n$ for $s_i\in S$ with $n=l(g)$. Then
$(s_is_{i+1})|_v\in\nucl\subset S$ and hence the element $g|_v$ has length $\leq (n+1)/2$. It follows that $g|_v\in\nucl$
for all words $v\in X^{*}$ of length $|v|\geq C\log n$. This justifies the term contracting group: the length of
restrictions exponentially decreases (\textquotedblleft contracts") until they become elements of the nucleus.

Now consider the Schreier graph $\gr_w=\gr_w(G,S)$ for a sequence $w=x_1x_2\ldots\in\xo$. Let $B(w,n)$ be the ball of
radius $n$ in the graph $\gr_w$ centered at the vertex $w$. Notice that if $g(v_1w_1)=v_2w_2$ for $v_1,v_2\in X^k$ and
$w_1,w_2\in\xo$ then $g|_{v_1}(w_1)=w_2$. Let $k$ be the least integer greater than $C\log n$. Then each sequence in the
ball $B(w,n)$ is of the form $v_2w_2$ for some $v_2\in X^k$ and $w_2=h(x_{k+1}x_{k+2}\ldots)$ for some $h\in \nucl$.
Hence
\[
|B(w,n)|\leq |X|^{C\log n +1}\cdot |\nucl(x_{k+1}x_{k+2}\ldots)|\leq |X|^{C\log n +1}\cdot |\nucl|,
\]
which is a polynomial in $n$. More precise estimate can be given using the contracting coefficient of the group (see
\cite[Proposition~2.13.8]{self_sim_groups}).

We want to apply similar arguments to groups generated by polynomial automata, and first we will establish certain weak
contracting properties of these groups.

\begin{lemma}\label{lemma_weak_contr_poly}
Let $G$ be a finitely generated subgroup of $\pol[m]$. There exists a constant $C$ such that for every $g\in G$ and every
word $v\in X^{*}$ of length $|v|\geq C\left(\log l(g)\right)^{m+1}$ the state $g|_v$ is either circuit or belongs to
$\pol[m-1]$.
\end{lemma}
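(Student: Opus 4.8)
The plan is to argue by induction on the degree $m$, using the induced presentation \eqref{eqn_induced_present} together with the two structural facts about polynomial automata recalled above: restriction never raises the degree, and a nontrivial circuit state of degree $m$ retains degree $m$ only along a single \emph{spine} (the unique word of each length on which the degree does not drop), every off-spine restriction having degree $\le m-1$; moreover there is a fixed $N$ so that for every $s\in\A$ and $v\in X^N$ the state $s|_v$ is circuit or of smaller degree. Fix a finite symmetric self-similar generating set $S$ realizing $G$ inside a polynomial automaton of degree $m$, set $\ell=l(g)$, and write $g=s_1\cdots s_\ell$ with $s_i\in S$. For each word $v$ the induced presentation gives $g|_v=t_1\cdots t_\ell$ with every $t_i=s_i|_{v_i}\in S$ and $l(g|_v)\le\ell$. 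The base case is $\pol[-1]$: if the generators are finitary, there is a uniform level $N_0$ (depending only on $\A$) with $s|_u=e$ for all $s\in S$ and $|u|\ge N_0$, so $g|_v=e$ for $|v|\ge N_0$, and the trivial state is circuit; this matches the exponent $(\log\ell)^{0}$ and is independent of $\ell$.

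For the inductive step I first discard the harmless factors. By the fixed-level fact above, after reading any $N$ letters every $t_i$ coming from a \emph{non}-circuit degree-$m$ generator already has degree $\le m-1$, so only the degree-$m$ circuit generators can keep $g|_v$ at degree $m$. Call a position $i$ \emph{active} (at a given level) if $t_i$ is a degree-$m$ circuit state; by the spine fact, position $i$ remains active only while $v_i$ continues to follow the spine of the current restriction of $s_i$, and the number of active positions is nonincreasing as we read further letters. Thus $g|_v$ has degree $m$ exactly when at least one active position survives, and the whole problem reduces to controlling how long active positions can persist without the product of the active factors being a circuit element.

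The source of the exponent $(\log\ell)^{m+1}$ is a two-scale scheme. I organize the reading of $v$ into \emph{super-levels}, each consisting of $\sim(\log\ell)^{m}$ ordinary levels. By the induction hypothesis applied to $m-1$ — using $l(g|_v)\le\ell$ so that every restriction encountered has length at most $\ell$ — one super-level suffices to drive every degree-$\le m-1$ restriction to a circuit or into $\pol[m-2]$; this ``cleans'' the lower-degree decorations attached along the spines of the surviving active factors. Once the decorations are cleaned, the degree-$m$ circuit factors that remain active are synchronized to their common spines (here the disjointness of nontrivial cycles in a polynomial automaton is essential), so that consecutive active factors either merge or cancel along the periodic spine word, and the number of active positions drops by a constant factor. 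Iterating, after $\sim\log\ell$ super-levels the active positions are reduced to a bounded set whose product is forced to be circuit (its restriction along the period returns to itself), and in total we have read $\sim\log\ell\cdot(\log\ell)^m=(\log\ell)^{m+1}$ letters, giving the claimed constant $C$.

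The main obstacle is the last point of the previous paragraph: proving that after a cleaning super-level the surviving degree-$m$ circuit factors genuinely \emph{contract} — that their number (or equivalently the word length of the degree-$m$ part of $g|_v$) decreases by a definite factor, and that when this contraction halts the resulting element is circuit rather than merely of degree $m$. Establishing ``degree $m$ and non-circuit'' forces strict reduction of some potential is exactly where the polynomial hypothesis (disjoint nontrivial cycles, hence essentially unique spines) must be converted into a quantitative cancellation statement; verifying this, and checking that the bound $l(g|_v)\le\ell$ keeps each invocation of the induction hypothesis at argument $\le\ell$ so the exponents add rather than compound, is the technical heart of the argument.
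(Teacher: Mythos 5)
Your skeleton coincides with the paper's: induction on $m$, the finitary base case $m=-1$, restriction via the induced presentation \eqref{eqn_induced_present} with $l(g|_v)\le l(g)$, and the two-scale accounting $(\log\ell)^{m+1}\approx(\log\ell)\cdot(\log\ell)^m$ coming from roughly $\log\ell$ halving phases, each of depth $\sim(\log\ell)^m$. But the step you yourself flag as ``the technical heart'' --- that after one cleaning phase the number of surviving degree-$m$ circuit factors drops by a constant factor, and that when the reduction halts the resulting element is genuinely circuit rather than merely of degree $m$ --- is precisely what the paper has to prove, and your proposal contains no argument for it: ``consecutive active factors either merge or cancel along the periodic spine word'' is an assertion, not a proof. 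That is a genuine gap, and it sits at the decisive point of the lemma.

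The paper closes this gap with three ingredients your outline lacks. First, a normalization: passing to a power of the alphabet, it arranges that every nontrivial cycle among the generators is a loop ($s|_x=s$ for a unique $x$), and then shows (by choosing, among all geodesic presentations of a circuit element, one maximizing the number of circuit generators from each $S_k=S\cap\pol[k]$) that every cycle in the complete automaton $\A(G)$ is a loop. Second, a strengthened induction statement (Lemma~1*): not merely ``circuit or in $G_{m-1}$,'' but with explicit control of the induced presentation --- either every generator of $g|_v$ lies in $S_{m-1}$, or $g|_v|_x=g|_v$ with the two induced presentations coinciding word-for-word. This strengthening is what makes the induction legitimate, since the hypothesis for $G_{m-1}$ requires length measured in $S_{m-1}$, not in $S$; your remark that $l(g|_v)\le\ell$ does not by itself supply this. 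Third --- the actual contraction mechanism --- for a circuit $g$ with $g|_x=g$ and $h\in G_{m-1}$ it analyzes $(hg)|_v=h|_{g(v)}\,g|_v$: either $g|_v\in G_{m-1}$ and the whole product lands in $G_{m-1}$, or $g|_v=g$ and the induction hypothesis applies to $h|_{g(v)}$, which is then either in $G_{m-2}$ (so $(hg)|_v=h'g$ with a strictly lower-degree decoration; recurse, at most $m$ times, the depths summing to $C_1(\log n)^m+C_1(\log n)^{m-1}+\cdots$) or circuit --- and in the circuit case either its loop letter matches $g(x)$, forcing $(hg)|_v$ itself to be circuit, or reading one more letter pushes the $h$-part into $G_{m-2}$. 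Applying this trichotomy to each pair $g_ih_ig_{i+1}$ of consecutive circuit blocks in the partition $g=h_0g_1h_1g_2h_2\ldots g_lh_l$ is what halves the number of blocks per phase. Without this case analysis (or an equivalent substitute), your outline describes the shape of the proof but does not establish the lemma.
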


To prove Lemma~\ref{lemma_weak_contr_poly} we use induction on $m$. However, there is one slight difficulty that in order
to apply the induction hypothesis we may need to consider the length of the element $g|_v$ for a different generating set
than the length of $g$. To overcome this problem we will make a few assumptions so that the groups involved in the
induction have consistent generating sets.

Without loss of generality, we can assume that the group $G$ is an automaton subgroup of $\pol[m]$. Let $S$ be a finite
symmetric self-similar generating set of $G$. Let $G_k$ be an automaton subgroup of $G$ generated by $S_k=S\cap\pol[k]$
for $k=-1,0,\ldots, m$. The generating set $S_k$ is again symmetric and self-similar. Notice that, in general, the group
$G_k$ may not coincide with $G\cap\pol[k]$.

Let us prove that the length of cycles in the complete automaton $\A(G)$ of the group $G$ is bounded. Without loss of
generality, by passing to a power of the alphabet, we can assume that the alphabet $X$ and the generating set $S$ satisfy
the following conditions:\\
\indent 1) for every $s\in S$ and $x\in X$ the state $s|_x$ is either circuit or has degree \\
\indent \indent  less than the degree of $s$;\\
\indent 2) for every nontrivial circuit state $s\in S$ there exists (unique) $x\in X$ such  \\
\indent\indent that $s|_x=s$ (every cycle in $S$ is a loop).\\
(for example, one can pass to the alphabet $X^k$, where $k$ is a multiple of the length of every cycle in $S$ and is
greater than the diameter of $S$). Let us prove that then every cycle in the automaton $\A(G)$ is a loop, i.e., for every
circuit element $g\in G$ there exists a letter $x\in X$ such that $g|_x=g$. Assume $g|_v=g$ for a nonempty word $v\in
X^{*}$ and let $g=s_1s_2\ldots s_n$ for $s_i\in S$ with $n=l(g)$. Notice that $s_i\not\in S_{-1}$ for every $i$, because
otherwise $g|_v=g$ can be expressed as a product $s_1s_2\ldots s_l$ with $l<n$. Among all presentations of $g$ as a
product $s_1s_2\ldots s_n$ with $n=l(g)$ we choose presentations with the maximal number of circuit generators from
$S_0$, and among such presentations we consider those with the maximal number of the rest of the generators from $S_0$.
Further, among selected presentations we choose those with the maximal number of circuit generators from $S_1$, and then
with the maximal number of the rest of the elements from $S_1$, and so on for $S_2, S_3, \ldots, S_m$. We obtain some
specific presentations of the element $g$. Let $g=s_1s_2\ldots s_n$ be one of such presentations and consider the induced
presentation \eqref{eqn_induced_present}:
\[
g=g|_v=s_1|_{v_1}\, s_2|_{v_2}\, \ldots \, s_n|_{v_n},
\]
where $v_{i-1}=s_i(v_i)$ with $v_n=v$. By our choice of the presentation $g=s_1s_2\ldots s_n$ we should get
\[
s_1|_{v_1}=s_1, \quad s_2|_{v_2}=s_2, \quad \ldots, \quad s_n|_{v_n}=s_n.
\]
Then
\[
s_1|_{x_1}=s_1,\quad s_2|_{x_2}=s_2,\quad \ldots,\quad s_n|_{x_n}=s_n,
\]
where $x_i$ is the first letter of $v_i$, and we get $g|_{x_n}=g$. It also follows that $g|_y\in G_{m-1}$ for $y\in X$,
$y\neq x_n$, and in its induced presentation $g|_y=s_1|_{y_1}\, s_2|_{y_2}\, \ldots \, s_n|_{y_n}$ every generator
$s_i|_{y_i}$ belongs to the set $S_{m-1}$. In particular, the length of $g|_y$ with respect to the generating set
$S_{m-1}$ of the group $G_{m-1}$ is not greater than $n$. Further we will use this property without specifying the
generating set.

To the rest of this section we always assume that the group $G$ and the generating set $S$ satisfy the above assumptions.

We will prove a slightly stronger formulation of Lemma~1.

\vspace{0.2cm} \noindent \textbf{Lemma~1*.} \textit{There exists a constant $C$ such that for every $g=s_1s_2\ldots
s_n\in G$ for $s_i\in S$ and every word $v\in X^{*}$ of length $|v|\geq C\left(\log n\right)^{m+1}$ either $g|_v\in
G_{m-1}$ and in the induced presentation $g|_v=s_1|_{v_1}s_2|_{v_2}\ldots s_n|_{v_n}$ every generator $s_i|_{v_i}$
belongs to $S_{m-1}$, or the element $g|_v$ is circuit, $g|_v|_x=g|_v$ for some $x\in X$, and the induced presentations
of $g|_v$ and $g|_v|_x$ coincide as words in generators. }\vspace{0.2cm}

\begin{proof}
The proof goes by induction on $m$. For $m=-1$ the group $G$ is a subgroup of the group $\pol[-1]$ of finitary elements.
There exists a constant $C$ such that for all words $v$ of length $|v|\geq C$ we have $s|_v=e$ for every $s\in S$
(actually by our assumption 1) on the generating set $S$ we can take $C=1$). Hence for every product $g=s_1s_2\ldots
s_n\in G$ for $s_i\in S$ we get $g|_v=e$ and in the induced presentation of $g|_v$ every element is trivial.

We assume the lemma holds for the groups $G_{k}$ (with $m$ replaced by $k$) for $k=-1,0,\ldots, m-1$ with some common
constant $C_1$.

Let $g\in G$ be a circuit element and $g|_x=g$ for a letter $x\in X$. Let $g=s_1s_2\ldots s_k$, $s_i\in S$, be a
presentation such that the induced presentation of $g=g|_x$ coincides with $s_1s_2\ldots s_k$ as a word in generators.
Then for every word $v\in X^{*}$ either $g|_v\in G_{m-1}$ and every generator in the induced presentation of $g|_v$
belong to $S_{m-1}$, or $g|_v=g$ (in the case $v=xx\ldots x$) and the induced presentation of $g|_v$ coincides with
$s_1s_2\ldots s_k$. Consider the product $hg$ for $h=t_1t_2\ldots t_n\in G_{m-1}$, $t_i\in S_{m-1}$, and its restrictions
$(hg)|_v=h|_{g(v)} g|_{v}$ for words $v\in X^{*}$ of length $|v|\geq C_1\left(\log n\right)^m$. We have the following
cases. If $g|_{v}\in G_{m-1}$ then $(hg)|_v\in G_{m-1}$, so further assume $g|_{v}=g$ and let us look at $h|_{g(v)}$. By
the assumption on the length of the word $v$ the element $h|_{g(v)}$ either belongs to $G_{m-2}$ or it is
circuit (with the specific induced presentation).\\
\indent If $h|_{g(v)}\in G_{m-2}$ then $(hg)|_v=h'g$ for $h'\in G_{m-2}$.\\
\indent If $h|_{g(v)}$ is circuit and $h|_{g(v)}|_{g(x)}=h|_{g(v)}$ then $(hg)|_v$ is circuit, here $(hg)|_v|_{x}=(hg)|_v$.\\
\indent If $h|_{g(v)}$ is circuit and $h|_{g(v)}|_{g(x)}\neq h|_{g(v)}$ then  $(hg)|_{vx}=h'g$ for $h'\in G_{m-2}$.\\
Hence in all cases the element $(hg)|_{u}$ for $u\in X^{|v|+1}$ is either circuit, or belongs to $G_{m-1}$, or it is of
the form $h'g$ for $h'\in G_{m-2}$. In the last case we can apply the same arguments to the product $h'g$. After $m$
steps we get either a circuit state or an element of $G_{m-1}$. In the worst case we need to take the words of length
\[
C_1(\log n)^m+1 + C_1(\log n)^{m-1}+1+\ldots +C_1(\log n)^{0}+1.
\]
Choose a constant $C_2$ such that $C_2(\log n)^m$ is greater than the value of the equation above. Then the element
$(hg)|_v$ for words $v\in X^{*}$ of length $|v|\geq C_2(\log n)^m$ is either circuit or belongs to $G_{m-1}$, and the
conditions on the induced presentation of $(hg)|_v$ are satisfied.

Let $g_1,g_2\in G$ be circuit elements and consider the product $g_1hg_2$ for $h\in G_{m-1}$. Assume $g_i$ is expressed
as a product in generators of length $k_i$ with the same properties as the product of $g$ above, and $h$ is expressed as
a product of $n$ generators from $S_{m-1}$. Then the element $(g_1hg_2)|_v$ for $v\in X^{*}$ of length $|v|\geq C_3(\log
(k_1+n+k_2))^m$ with $C_3=2C_2$ is either circuit or belongs to $G_{m-1}$ with the specific induced presentation.

Now consider an arbitrary element $g=s_1s_2\ldots s_n\in G$ for $s_i\in S$. Let us partition the product $s_1s_2\ldots
s_n$ on blocks
\begin{equation}\label{eqn_lemma_g=ghgh...}
g=h_0g_1h_1g_2h_2\ldots g_lh_l
\end{equation}
such that every block $h_i$ contains only generators from $S_{m-1}$ (may be trivial), and $g_i$ is either a generator
from $S\setminus S_{m-1}$, or $g_i$ is circuit and the induced presentation of $g_i|_x$ coincides with the presentation
of $g_i$ for some $x\in X$. Notice that the sum of lengths of all $h_i$ and $g_i$ is equal to $n$. By passing to the
induced presentation of the state $g|_x$ on any letter $x\in X$ we can assume that every element $g_i$ is circuit.
Consider every product $g_1h_1g_2$, $g_3h_3g_4$, and so on, in the presentation (\ref{eqn_lemma_g=ghgh...}). Every such
block restricted to a word of length $\geq C_3(\log n)^m$ is either circuit or belongs to $G_{m-1}$. Hence the state
$g|_v$ for words $v\in X^{*}$ of length $|v|\geq C_3(\log n)^m+1$ can be expressed as a product
(\ref{eqn_lemma_g=ghgh...}) with $\leq (l+1)/2$ positions with some circuit elements $g_i$. Applying the same procedure
$\log l+1$ times we get either a circuit state or an element of $G_{m-1}$. Choose a constant $C$ such that $C(\log
n)^{m+1}\geq (C_3(\log n)^m+1)(\log n +1)$ (here we use $l\leq n$). Then $g|_v$ for $v\in X^{*}$ of length $|v|\geq
C(\log n)^{m+1}$ is either circuit or belongs to $G_{m-1}$, and in both cases it has the required induced presentation.
\end{proof}

\begin{corollary}
The word problem in every finitely generated subgroup of $\pol[m]$ is solvable in subexponential time.
\end{corollary}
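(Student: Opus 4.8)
The plan is to solve the word problem for $g\in G$ by reducing it to a decision about restrictions, exploiting the weak contracting property established in Lemma~1*. Given a word $g=s_1s_2\ldots s_n$ in the generators $S$ (with $n=l(g)$, or simply the length of the input word), the key observation is that $g$ acts trivially on $\xo$ if and only if $g$ acts trivially on $X^N$ for $N=C(\log n)^{m+1}$, followed by a check that each restriction $g|_v$ for $v\in X^N$ is trivial. By Lemma~1*, every such restriction $g|_v$ is either an element of $G_{m-1}$ (with an induced presentation whose length is bounded by $n$) or it is a circuit element satisfying $g|_v|_x=g|_v$ for some $x\in X$, again with controlled induced presentation. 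This is exactly the structural dichotomy I want to recurse on.

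First I would verify the action on the initial segment $X^N$ directly: computing $g(v)$ for a single word $v\in X^N$ takes time polynomial in $N$ and $n$ (one simply follows the automaton, reading off the induced presentation at each letter), and there are $|X|^N=|X|^{C(\log n)^{m+1}}$ words to check, which is already subexponential in $n$ since $|X|^{C(\log n)^{m+1}}=2^{C'(\log n)^{m+1}}=n^{C'(\log n)^{m}}$. Next, for each such $v$ I must decide whether $g|_v$ is trivial. For the circuit case this is immediate: a circuit element with $g|_v|_x=g|_v$ is trivial iff it acts trivially on $x$ and all the finitely many other first-letter branches collapse to triviality, so triviality can be read off locally (and is inherited from the finitely many nucleus-like behaviours). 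For the $G_{m-1}$ case I recurse: the induced presentation of $g|_v$ has length at most $n$ in the generating set $S_{m-1}$ of $G_{m-1}\subset\pol[m-1]$, so by induction on the degree $m$ the word problem in $G_{m-1}$ is solvable in subexponential time, and the base case $\pol[-1]$ (finitary elements) is trivially decidable in constant depth.

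The main obstacle I anticipate is bookkeeping the total running time across the recursion so that it remains subexponential rather than accidentally blowing up to exponential. Each level of the recursion on the degree multiplies the work by a factor $|X|^{C(\log n)^{m+1}}$ coming from the branching over $X^N$, but crucially the length parameter $n$ does not grow when passing to restrictions (Lemma~1* guarantees $l(g|_v)\le n$ in the relevant generating set), so the depth of the recursion is bounded by $m+1$, a constant. Thus the total time is a product of $m+1$ subexponential factors, each of the form $n^{O((\log n)^{m})}$, which is again subexponential in $n$; concretely one obtains a bound of the shape $B^{(\log n)^{m+1}}$ for a suitable constant $B$, matching the estimate promised in the introduction. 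The only subtlety to handle carefully is that the generating set changes as one descends from $G$ to $G_{m-1}$ to $G_{m-2}$ and so on, but the consistency assumptions arranged before Lemma~1* (the self-similar symmetric generating sets $S_k=S\cap\pol[k]$) ensure the induced presentations land in the correct generating set at each level, so the induction is genuinely on $m$ with a uniform treatment.
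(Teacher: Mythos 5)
Your proposal is correct and follows essentially the same route as the paper: reduce triviality of $g$ to its action on $X^k$ with $k$ just above $C(\log n)^{m+1}$ plus triviality of all restrictions $g|_v$, then use the dichotomy of Lemma~1* to recurse into $G_{m-1}$, with recursion depth bounded by $m+1$ and non-increasing word length, giving a bound of the form $|X|^{C_1(\log n)^{m+1}}$. One phrasing correction: in the circuit case the branches $g|_v|_y$ for $y\neq x$ are not ``read off locally'' (and there is no nucleus in play here) --- Lemma~1* guarantees they are elements of $G_{m-1}$ with induced presentations of length at most $n$ in $S_{m-1}$, so their triviality is decided by the same recursive call to the word problem in $G_{m-1}$ as in your other case, which is exactly how the paper closes this branch.
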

\begin{proof}
We use the same notations and assumptions as above.

Consider an element $g=s_1s_2\ldots s_n\in G$ for $s_i\in S$. Let $k$ be the least integer greater than $C(\log
n)^{m+1}$. The element $g$ is trivial if and only if it acts trivially on $X^k$ and every element $g|_v$ for $v\in X^k$
is trivial. Notice that the size of $X^k$ is subexponential in $n$. Consider the induced presentation
$g|_v=s'_1s'_2\ldots s'_n$ with $s'_i=s_i|_{v_i}$. If $s'_i\in S_{m-1}$ for all $i$, then the problem reduces to the word
problem in the group $G_{m-1}$. Otherwise, $g|_v$ is circuit and there exists $x\in X$ such that the induced presentation
$g|_v|_x=s'_1|_{x_1}s'_2|_{x_2}\ldots s'_n|_{x_n}$ coincides with the presentation $g|_v=s'_1s'_2\ldots s'_n$ letter by
letter, i.e., $s'_i|_{x_i}=s'_i$ for all~$i$. Then $g|_v$ is trivial if and only if it acts trivially on $X$ and every
element $g|_v|_y\in G_{m-1}$ for $y\in X$, $y\neq x$, is trivial. Again the problem reduces to the word problem in
$G_{m-1}$. By induction we conclude that the word problem is solvable in subexponential time with an upper bound
$|X|^{C_1(\log n)^{m+1}}$ for some constant $C_1$.
\end{proof}

We are ready to prove the main result.

\begin{theorem}\label{theorem_growth_Schreier_poly}
Let $G$ be a finitely generated subgroup of $\pol[m]$. There exists a constant $C$ such that every orbital Schreier graph
$\gr_w(G)$ for $w\in\xo$ has subexponential growth not greater than $|X|^{C\left(\log n\right)^{m+1}}$.
\end{theorem}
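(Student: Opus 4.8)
The plan is to imitate the contracting-case argument recalled at the beginning of this section, replacing the nucleus $\nucl$ by the finer structural information supplied by Lemma~1$^*$, and to run an induction on the degree $m$. As there, I assume $G$ is an automaton subgroup of $\pol[m]$ with $S$, $G_k$, $S_k$ as above, and I prove the sharper statement that $|B(w,n)|\le |X|^{C(\log n)^{m+1}}$ holds for all $w\in\xo$ with one constant $C$ independent of the base point $w$; this uniformity over $w$ is exactly what makes the induction close. For the base case $m=-1$ the group $G$ lies in $\pol[-1]$, and by assumption~1) on the generating set $g|_v=e$ for every $|v|\ge 1$ and every generator, hence $g(w)=g(v)\,w'$ whenever $w=vw'$ with $|v|\ge 1$. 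Thus the whole orbit of $w$ is finite of size $\le|X|$, matching the desired bound $|X|^{C(\log n)^{0}}$.

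For the inductive step I would fix the least integer $k$ greater than $C_1(\log n)^{m+1}$, where $C_1$ is the constant of Lemma~1$^*$. Writing $w=vw'$ with $|v|=k$, for any $g$ with $l(g)\le n$ we have $g(w)=g(v)\,g|_v(w')$; the prefix $g(v)$ ranges over at most $|X|^{k}$ values, so it remains to bound the number of distinct tails $g|_v(w')$. By Lemma~1$^*$ each restriction $g|_v$ is either circuit or lies in $G_{m-1}$, and in both cases has length $\le n$ in the relevant generating set. If $g|_v\in G_{m-1}$, then $g|_v(w')$ lies in the ball of radius $n$ around $w'$ in $\gr_{w'}(G_{m-1})$, so by the induction hypothesis there are at most $|X|^{C(\log n)^{m}}$ such tails.

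The remaining case, $g|_v$ circuit, is the main obstacle, and it is handled using the precise form of a circuit element. If $h=g|_v$ is circuit with $h|_x=h$, then $h$ reproduces itself along the ray $x^\infty$ and drops into $G_{m-1}$ off this ray (this is exactly the dichotomy ``$h|_x=h$, and $h|_y\in G_{m-1}$ for $y\ne x$'' established before Lemma~1$^*$). Consequently, letting $p$ be the length of the initial run of $x$ in $w'$, either $w'=x^\infty$ and $h(w')=(h(x))^\infty$ is one of at most $|X|$ sequences, or $w'=x^{p}yw''$ with $y\ne x$ and
\[
h(w')=(h(x))^{p}\,h(y)\,h|_y(w''),\qquad h|_y\in G_{m-1},\ l(h|_y)\le n .
\]
Here the prefix $(h(x))^{p}h(y)$ is determined by the two letters $h(x),h(y)\in X$, while the deeper tail $h|_y(w'')$ lies in the radius-$n$ ball around the fixed tail $w''$ in $\gr_{w''}(G_{m-1})$. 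Summing over the at most $|X|$ possible ray-letters $x$ and applying the induction hypothesis again bounds the number of circuit tails by $|X|^{C'(\log n)^{m}}$.

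Combining the two cases, the number of distinct tails is at most $|X|^{C''(\log n)^{m}}$, whence
\[
|B(w,n)|\le |X|^{k}\cdot|X|^{C''(\log n)^{m}}\le |X|^{C_1(\log n)^{m+1}+1+C''(\log n)^{m}} .
\]
Using the convention $\log n\ge 1$, we have $(\log n)^{m}\le(\log n)^{m+1}$, so the lower-order terms are absorbed by enlarging the constant, giving $|B(w,n)|\le|X|^{C(\log n)^{m+1}}$ for a suitable $C$ and completing the induction. The delicate points to verify carefully are the uniformity of the constant across all base points $w$ (needed to reapply the hypothesis at the shifted tails $w'$ and $w''$) and the bookkeeping that the length of every restriction stays $\le n$ in the generating set to which the induction is applied; both are guaranteed by the self-similarity of $S$ and by the length estimates in Lemma~1$^*$.
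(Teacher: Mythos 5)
Your proposal is correct and follows essentially the same route as the paper's own proof: fix a prefix of length $k$ just above $C(\log n)^{m+1}$, use Lemma~1$^*$ to split the restrictions $g|_v$ into elements of $G_{m-1}$ and circuit elements, exploit for circuit $h$ the dichotomy $h|_x=h$ and $h|_y\in G_{m-1}$ with $l_{S_{m-1}}(h|_y)\le n$ for $y\ne x$, and close the induction by applying the uniform-in-$w$ bound for $G_{m-1}$ at the shifted tails. The only cosmetic difference is bookkeeping: you parametrize the circuit case by the run-length $p$ of the loop letter at the start of the tail, whereas the paper splits on whether the loop letter equals the first letter of the tail, but the resulting counts coincide.
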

\begin{proof}
The proof goes by induction on $m$. For $m=-1$ the group $G<\pol[-1]$ is finite, and every Schreier graph $\gr_w$ has at
most $|G|$ vertices. We can also start the induction from $m=0$ using the fact that in this case the group $G<\pol[0]$ is
contracting (see \cite{bn:pcf}).

We suppose by induction that the statement holds for the group $G_{m-1}$ and every Schreier graph
$\gr_w(G_{m-1},S_{m-1})$ has subexponential growth not greater than $|X|^{C_1\left(\log n\right)^{m}}$ with some constant
$C_1$.

Fix a sequence $w=x_1x_2\ldots\in\xo$ and consider the ball $B(w,n)$ in the graph $\gr_w=\gr_w(G,S)$ of radius $n$
centered at the vertex $w$. If $g(v_1w_1)=v_2w_2$ for $v_1,v_2\in X^k$ and $w_1,w_2\in\xo$ then $g|_{v_1}(w_1)=w_2$.
Hence for every fixed $k$ each sequence in the ball $B(w,n)$ is of the form $v_2w_2$ for some $v_2\in X^k$ and
$w_2=h(x_{k+1}x_{k+2}\ldots)$ for some $h\in \nucl(n,k)$, where
\[
\nucl_{(n,k)}=\{g|_{x_1x_2\ldots x_k} : g\in G \mbox{ and } l(g)\leq n \}.
\]
It follows that
\begin{equation}\label{eqn_theorem_growth_estimate}
|B(w,n)|\leq |X|^k\cdot |\nucl_{(n,k)}(v)|,
\end{equation}
where $v=x_{k+1}x_{k+2}\ldots$.

Let $H_n=\{h\in G_{m-1}: l_{S_{m-1}}(h)\leq n\}$ be the ball of radius $n$ in the group $G_{m-1}$ with respect to its
generating set $S_{m-1}$.

Let $k$ be the least integer greater than $C(\log n)^{m+1}$ given in Lemma~\ref{lemma_weak_contr_poly}.  Then for every
$g\in\nucl_{(n,k)}\setminus H_n$ there exists $x\in X$ such that $g|_x=g$. Hence
\[
\nucl_{(n,k)}\subset \bigcup_{x\in X} \nucl^x_{(n,k)} \cup H_n,
\]
where $\nucl^x_{(n,k)}=\{g\in\nucl_{(n,k)} : g|_x=g\}$ for $x\in X$. By induction hypothesis the size of the orbit
$H_n(v)$ is not greater than $|X|^{C_1(\log n)^m}$. Let us estimate the size of the orbits $\nucl_{(n,k)}^x(v)$. Let
$x\in X$ be the first letter of the word $v$ and consider the following cases. If $v=xx\ldots=x^{\infty}$ then for
$g\in\nucl_{(n,k)}^x$ we have $g(v)=z^{\infty}$ for some $z\in X$; and for $g\in\nucl_{(n,k)}^y$ with $y\in X$, $y\neq
x$, we have $g(v)=zh(x^{\infty})$ for some $z\in X$ and $h\in H_n$. Hence we get estimates
\[
|\nucl_{(n,k)}^x(v)|\leq |X|\quad \mbox{ and }\quad |\nucl_{(n,k)}^y(v)|\leq |X|\cdot |H_n(v)|
\]
for $y\in X$, $y\neq x$. If $v=x^lx_1v_1$ for $x_1\neq x$ then for $g\in\nucl_{(n,k)}^x$ we have $g(v)=z^lz_1h(v_1)$ for
some $z,z_1\in X$ and $h\in H_n$; and for $g\in\nucl_{(n,k)}^y$ with $y\in X$, $y\neq x$, we have
$g(v)=zh(x^{l-1}x_1v_1)$ for some $z\in X$ and $h\in H_n$. Hence we get estimates
\[
|\nucl_{(n,k)}^x(v)|\leq |X|^2\cdot |H_n(v_1)|\quad \mbox{ and }\quad |\nucl_{(n,k)}^y(v)|\leq |X|\cdot
|H_n(x^{l-1}x_1v_1)|
\]
for $y\in X$, $y\neq x$. Summarizing all estimates we get
\[
|B(w,n)|\leq |X|^{C(\log n)^{m+1}+1}\cdot |X|^3\cdot |X|^{C_1(\log n)^m}\leq |X|^{C_2(\log n)^{m+1}}
\]
for some constant $C_2$.
\end{proof}

As a corollary we recover the following result from \cite[Corollary~4.6]{free_subgroups}.

\begin{corollary}
The orbital Schreier graphs $\gr_w$ for $w\in\xo$ of groups generated by polynomial automata are amenable.
\end{corollary}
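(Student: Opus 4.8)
The plan is to deduce amenability directly from the subexponential growth estimate just established, via the Følner criterion for bounded-degree graphs. First I would observe that a group $G$ generated by a polynomial automaton of degree $m$ is a finitely generated subgroup of $\pol[m]$: its generators are states of a polynomial automaton of degree $m$, hence elements of $\pol[m]$. Therefore Theorem~\ref{theorem_growth_Schreier_poly} applies, and every orbital Schreier graph $\gr_w(G)$ for $w\in\xo$ has subexponential growth, bounded above by $|X|^{C(\log n)^{m+1}}$ for a suitable constant $C$.

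Next I would recall that every orbital Schreier graph $\gr_w(G,S)$ is locally finite with vertex degrees bounded by $|S|$, since each vertex has at most one incident edge per generator in the symmetric generating set $S$. For such a graph amenability is equivalent to the existence of a Følner sequence, and the natural candidates are the balls $B(w,n)$ centered at $w$. The task thus reduces to the elementary implication \emph{subexponential growth $\Rightarrow$ Følner}.

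The main step is then this reduction. Writing $\gamma(n)=|B(w,n)|$ and letting $s(n)=\gamma(n)-\gamma(n-1)$ denote the number of vertices at distance exactly $n$ from $w$, I note that the vertex boundary of $B(w,n)$ is contained in this sphere, so it suffices to show $\liminf_n s(n)/\gamma(n)=0$. If this failed, there would exist $\varepsilon>0$ and $N$ with $\gamma(n)-\gamma(n-1)\geq\varepsilon\,\gamma(n)$, hence $\gamma(n)\geq(1-\varepsilon)^{-1}\gamma(n-1)$, for all $n\geq N$; iterating this would force at least geometric, i.e.\ exponential, growth, contradicting the subexponential bound of Theorem~\ref{theorem_growth_Schreier_poly}. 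Choosing a subsequence $n_k$ along which $s(n_k)/\gamma(n_k)\to 0$, the balls $B(w,n_k)$ form a Følner sequence, and therefore $\gr_w(G)$ is amenable.

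I expect essentially no serious obstacle here: all of the genuine work is already contained in the growth estimate of Theorem~\ref{theorem_growth_Schreier_poly}. The only point requiring a little care is the passage from the growth of the vertex count to the Følner condition, namely that the boundary of a ball is controlled by its sphere and that subexponential growth forces the boundary-to-volume ratio to vanish along a subsequence. This is the standard bounded-degree analogue of the classical fact that finitely generated groups of subexponential growth are amenable, and it adds nothing beyond the growth bound already proved.
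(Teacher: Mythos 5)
Your proposal is correct and matches the paper's (implicit) argument: the paper states this corollary immediately after Theorem~\ref{theorem_growth_Schreier_poly} with no written proof, precisely because it follows from the standard fact that a connected bounded-degree graph of subexponential growth has balls forming a F{\o}lner sequence along a subsequence, which is exactly the reduction you carry out. Your sphere-versus-ball estimate and the geometric-growth contradiction are the right way to fill in that standard step, so nothing is missing.
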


\begin{remark}
It is shown in \cite{linear} that every group generated by a polynomial automaton of
degree $m$ embeds in a certain ``mother'' group, which is generated by polynomial
automata similar to the one in the first example of Section~\ref{section_Examples}.
Hence, one can establish Theorem~\ref{theorem_growth_Schreier_poly} just by estimating
the growth of Schreier graphs of the mother groups. However, the fundamental steps of
the proof remain the same. Also, for a particular automaton
Lemma~\ref{lemma_weak_contr_poly}* may hold for a smaller value of $m$ than the degree
of the automaton, and we may get a better estimate on the growth of Schreier graphs.
\end{remark}

\section{A generalization of Theorem~\ref{theorem_main}}\label{Generalized Version}

Theorem~\ref{theorem_main} can be generalized to automaton groups with a certain combination of contracting and
polynomial properties.

\begin{theorem}\label{theorem_growth_Schr_general}
Let $\A$ be a finite automaton with subautomaton $\B$ such that different cycles in $\A\setminus \B$ are disjoint, and
the group generated by $\B$ is contracting. Let $G$ be the automaton group generated by $\A$. There exists a constant $C$
such that every orbital Schreier graph $\gr_w(G)$ for $w\in\xo$ has subexponential growth bounded above by
$|X|^{C\left(\log n\right)^{m+1}}$, where $m$ is the maximal number of different cycles in $\A\setminus \B$ connected by
a directed path.
\end{theorem}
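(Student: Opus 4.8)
The plan is to mirror the proof of Theorem~\ref{theorem_growth_Schreier_poly} almost verbatim, but with the role of the finitary subgroup $\pol[-1]$ replaced by the contracting subgroup generated by $\B$. The key structural insight is that the generalized hypothesis—a contracting subautomaton $\B$ together with disjoint cycles in $\A\setminus\B$—is exactly the combinatorial abstraction of what drove the polynomial case. There, the ``base layer'' $\pol[-1]$ consisted of finitary elements whose restrictions eventually became trivial; here the base layer is the contracting group $\langle\B\rangle$, whose restrictions eventually fall into a finite nucleus $\nucl$. Both give a uniform ``collapse'' of restrictions on words of length $O(\log n)$: for contracting groups this is precisely the classical estimate recalled at the start of Section~\ref{Proof}, giving $\langle\B\rangle$-restrictions of length $\leq C\log n$ that lie in $\nucl$.

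First I would set up the filtration. Define $\B_k$ to be the subautomaton consisting of states reachable from cycles at ``polynomial depth'' $\leq k$ within $\A\setminus\B$, together with all of $\B$, so that $\B_{-1}=\B$ generates the contracting base group and $m$ is the top depth. I would then establish the analogue of Lemma~1*: there is a constant $C$ such that for every $g=s_1\cdots s_n$ and every $v$ with $|v|\geq C(\log n)^{m+1}$, the restriction $g|_v$ is either circuit (with a self-reproducing induced presentation $g|_v|_x=g|_v$) or lies in the group generated by $\B_{m-1}$, with every generator in its induced presentation belonging to $S_{m-1}$. The induction on $m$ is identical to the one given: a single circuit block $g$ times a $\langle\B_{m-1}\rangle$-element $h$ collapses, after restriction to a word of length $O((\log n)^m)$, into one of three controlled cases (circuit, an element of $\langle\B_{m-1}\rangle$, or $h'g$ with $h'$ of strictly lower depth), and iterating over the block decomposition $g=h_0g_1h_1\cdots g_lh_l$ with the ``halving'' argument costs an extra factor $\log n$.

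With the weak contracting lemma in hand, the growth estimate follows the proof of Theorem~\ref{theorem_growth_Schreier_poly} line for line. I would fix $w$, set $k$ to be the least integer exceeding $C(\log n)^{m+1}$, and write each vertex of the ball $B(w,n)$ as $v_2w_2$ with $v_2\in X^k$ and $w_2=h(x_{k+1}x_{k+2}\cdots)$ for $h$ in the restriction set $\nucl_{(n,k)}$. The decomposition $\nucl_{(n,k)}\subset\bigcup_{x\in X}\nucl^x_{(n,k)}\cup H_n$ into circuit restrictions and elements of the lower ball $H_n=\{h\in\langle\B_{m-1}\rangle: l_{S_{m-1}}(h)\leq n\}$ is valid verbatim, and the orbit-size estimates for $\nucl^x_{(n,k)}(v)$ in terms of $|H_n(v)|$ carry over unchanged. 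Combined with the inductive bound $|H_n(v)|\leq|X|^{C_1(\log n)^m}$, this yields $|B(w,n)|\leq|X|^{C_2(\log n)^{m+1}}$.

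The base case is where the two frameworks genuinely differ, and it is the step I expect to need the most care rather than the most ingenuity. In the polynomial setting the induction bottomed out at $m=-1$ with $G<\pol[-1]$ finite; here the base is $G_{-1}=\langle\B\rangle$ contracting but possibly infinite, so the induction must start at $m=-1$ by invoking the classical contracting estimate: all orbital Schreier graphs of a contracting group have polynomial growth, bounded by $|X|^{C\log n}\cdot|\nucl|$, which is of the required form $|X|^{C(\log n)^{0}}$ up to constants. The main obstacle is therefore bookkeeping consistency: I must verify that the generating-set normalizations of Section~\ref{Proof} (passing to a power $X^k$ so every cycle becomes a loop and $s|_x$ drops depth) can be imposed simultaneously with the contracting structure of $\B$ preserved—that is, that enlarging the alphabet does not destroy the hypothesis ``$\langle\B\rangle$ is contracting'' or the disjointness of cycles in $\A\setminus\B$. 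Since contractingness and cycle-disjointness are stable under passing to $\A^{(k)}$, this is routine, and once the filtration $\B_{-1}\subset\B_0\subset\cdots\subset\B_m$ with consistent self-similar generating sets is fixed, the entire argument of Section~\ref{Proof} transfers with $\pol[k]$ replaced by $\langle\B_k\rangle$ throughout.
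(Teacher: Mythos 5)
Your proposal founders on the one point where the generalized setting genuinely differs from the polynomial one, and which you dismiss as routine bookkeeping: the cycle structure of the complete automaton $\A(G)$. In the polynomial case, the argument preceding Lemma~1* shows, via the presentation maximizing the number of circuit generators, that \emph{every} circuit element $g\in G$ satisfies $g|_x=g$ for some letter $x\in X$; this is exactly what licenses the clause ``circuit with $g|_v|_x=g|_v$'' in Lemma~1* and the decomposition $\nucl_{(n,k)}\subset\bigcup_{x\in X}\nucl^x_{(n,k)}\cup H_n$ in the growth estimate. That argument breaks when the base layer is the contracting group $\langle\B\rangle$ rather than the finitary group $\pol[-1]$: generators lying in the nucleus $\nucl$ are not forced to reproduce themselves under restriction, because maximality of the presentation only forces them to stay inside $\nucl$, so under restriction they may wander along the cycles of the nucleus. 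Consequently circuit elements of $G$ need not admit loops, and the paper explicitly notes that, unlike the polynomial case, one \emph{cannot} pass to a power of the alphabet to turn every cycle of $\A(G)$ into a loop. Your normalization claim (``passing to a power $X^k$ so every cycle becomes a loop \ldots\ is routine'') conflates cycles of the finite automaton $\A\setminus\B$, which can indeed be made loops, with cycles of the infinite automaton $\A(G)$, which cannot.

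What the paper does instead, and what is missing from your proposal, is Lemma~\ref{lemma_cycles_are_loops_generalized}: the lengths of cycles in $\A(G)$ are uniformly bounded by a constant $L$ (essentially the least common multiple of the cycle lengths in $\nucl$), proved by a new argument that traces the nucleus positions of an extremal presentation along the periodic paths they follow inside $\nucl$. This bounded-but-not-unit period $L$ then propagates through everything you claimed carries over ``verbatim.'' In the analogue of Lemma~1* (Lemma~\ref{lemma_weak_contr_poly_generalized}) the circuit case has a return word $u$ of length $|u|\leq L$ rather than a single letter, and one must synchronize the cycle of $g|_v$ with the cycle of $h|_{g(v)}$ by passing to the power $u^l$; moreover the recursion no longer bottoms out trivially as in the finitary case, but at products $hg$ with $h\in\nucl$, which must again be handled by the periodicity argument of Lemma~\ref{lemma_cycles_are_loops_generalized}. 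Finally, the counting in the growth estimate must run over $\bigcup_{|u|\leq L}\nucl^u_{(n,k)}$ with correspondingly modified orbit bounds (producing the extra factors $|X|^{1+2+\ldots+L}\cdot|X|^{2L}$), not over $\bigcup_{x\in X}\nucl^x_{(n,k)}$. Your overall strategy (filtration, weak-contraction lemma, restriction-set counting, contracting base case) is the right skeleton, but without the bounded-cycle lemma and the $L$-modifications the circuit/non-circuit dichotomy and the orbit estimates you invoke are unjustified.
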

\begin{proof}
The proof goes by induction on $m$ following the same strategy as in the proof of
Theorem~\ref{theorem_growth_Schreier_poly}.

First, we make a few assumptions about the generating sets to make our life easier. We can assume that the generating
sets $\A$ and $\B$ are symmetric, and $\B$ contains the nucleus $\nucl$ of the contracting group $\langle \B\rangle$. Let
$S_k$ for $k=0,1,\ldots,m$ be the largest subautomaton of $\A$ such that $S_k\setminus \B$ contains at most $k$ cycles
connected by a directed path.  Then every $S_k$ is also symmetric and self-similar. We pass to a power of the alphabet so
that every cycle in $\A\setminus \B$, and hence in $S_k\setminus S_0$, is actually a loop; and for every $s\in
S_k\setminus S_0$ and $x\in X$ either $s|_x\in S_{k-1}$ or $s|_x$ is circuit.

Let $G_k$ be the group generated by $S_k$. In particular, $G=G_m$ and we use notation $S=S_m$. The group $G_0$ is
contracting with nucleus $\nucl$. Indeed, by construction, there are no cycles in $S_0\setminus\nucl$, and hence
$s|_v\in\nucl$ for all $s\in S_0$ and all words $v$ of length greater than the diameter of~$S_0\setminus\nucl$.

\begin{lemma}\label{lemma_cycles_are_loops_generalized}
The length of cycles in the complete automaton $\A(G)$ of the group $G$ is bounded.
\end{lemma}
\begin{proof}
Take a circuit element $g\in G$. Let $g|_v=g$ for a nonempty word $v$, and $g|_u\neq g$ for every beginning $u$ of $v$.
Over all presentation of $g$ as a product $s_1s_2\ldots s_n$ for $s_i\in S$ with $n=l(g)$, we consequently choose
presentations with the maximal number of circuit elements $s_i\in S_k$ and then with the maximal number of any elements
$s_i\in S_k$ for $k=0,1,\ldots,m$. We obtain some specific presentations of the element $g$. Let $g=s_1s_2\ldots s_n$ be
one of such presentations and consider the induced presentation~\eqref{eqn_induced_present}:
\[
g=g|_v=s_1|_{v_1} s_2|_{v_2}\ldots s_n|_{v_n},
\]
where $v_{i-1}=s_i(v_i)$ with $v_n=v$. Notice, that this presentation also satisfies the above assumptions, as is the
induced presentation $g=g|_{v\ldots v}$ for every iteration $v\ldots v$ of the word $v$. In particular, every $s_i$ is
actually circuit, because otherwise the presentation given by $g=g|_{v\ldots v}$ would contain more circuit generators
than the chosen one.

Two consecutive elements $s_i$ and $s_{i+1}$ in the presentation cannot both lie in $\nucl$. Indeed, in this case
$(s_is_{i+1})|_{u}\in\nucl\subset S$ for all sufficiently large $u$. Hence $g=g|_{v\ldots v}$ can be expressed as a
product of less than $n$ generators for large enough iteration $v\ldots v$, contradicting $n=l(g)$.

Our choice of the presentation $g=s_1s_2\ldots s_n$ and the assumptions on the generating set $S$ force $s_i|_{v_i}=s_i$
for every $s_i\not\in\nucl$. Hence $v_i=x_i^m$ (here $m=|v|$), where $x_i\in X$ is the unique letter such that
$s_i|_{x_i}=s_i$. If $s_i\in\nucl$ for $i<n$ then $s_{i+1}\not\in\nucl$ and we still get $v_i=x_i^m$ with
$x_i=s_{i+1}(x_{i+1})$.

Assume $s_n\not\in\nucl$, and hence $v=x^m$ with $x=x_n$. Consider the induced presentations of $g|_{x^l}$ for every
$l=1,2,\ldots$. If $s_i\not\in\nucl$ then the $i$-th generator in the presentation of every $g|_{x^l}$ remains the same
and is equal to $s_i$. Let us trace the positions of elements from $\nucl$. If $s_i\in\nucl$ then the $i$-th element in
the presentation of $g|_{x^l}$ is equal to $s_i|_{x_i^l}$. Hence these $i$-th elements change according to the vertices
of the path in the nucleus $\nucl$ that starts at the state $s_i$ and goes along arrows with left label $x_i$. It follows
that every position is pre-periodic. We can eliminate the pre-periods by passing from the presentation $g=s_1s_2\ldots
s_n$ to the induced presentation $g=g|_{x^l}$ for large enough $l$, and hence we assume that every position repeats
periodically. If $l$ is a multiple of the lengths of every cycle in $\nucl$, then the induced presentation of $g|_{x^l}$
coincides with $s_1s_2\ldots s_n$ letter by letter, and hence $g=g|_{x^l}$. It follows that the length of the word $v$ is
bounded by the least common multiple of the length of cycles in the nucleus $\nucl$.

If $s_n\in\nucl$ then $s_{n-1}\not\in\nucl$ and $v=s_{n}^{-1}(x^m)$ with $x=x_{n-1}$. We can apply the same arguments as
above to the induced presentations of $g|_{s_n^{-1}(x^l)}$ for $l=1,2,\ldots$. The only difference is that the $n$-th
element in the induced presentations changes according to the path in the nucleus $\nucl$ that starts at the state $s_n$
and goes along arrows with right label~$x$.
\end{proof}

Let $L$ be the upper bound on the length of cycles in $\A(G)$. Note that in general we cannot pass to a power of the
alphabet to transform every cycle into a loop, i.e., to make $L=1$, as was possible in the case of polynomial automata.

Further, we will use the fact proved in Lemma~\ref{lemma_cycles_are_loops_generalized}, that if $g\in G_k$ for $k\geq 1$
is circuit and $g|_v=g$ then $g|_u\in G_{k-1}$ for every $u\in X^{|v|}$, $u\neq v$, and the length of $g|_u$ with respect
to $S_{k-1}$ is not greater than the length of $g$ with respect to~$S_k$.

The formulation of Lemma~1 remains the same (with $m\geq 1$).

\begin{lemma}\label{lemma_weak_contr_poly_generalized}
There exists a constant $C$ such that for every $g=s_1s_2\ldots s_n\in G$ for $s_i\in S$ and every word $v\in X^{*}$ of
length $|v|\geq C\left(\log n\right)^{m+1}$ the element $g|_v$ is either circuit, or $g|_v\in G_{m-1}$.
\end{lemma}
\begin{proof}
The proof is basically the same as the one of Lemma~1*. Let us indicate only the main argument.

We assume the lemma holds for the groups $G_{k}$ for $k=1,\ldots,m-1$ with some common constant $C_1$. Let $g\in G$ be a
circuit element. Consider the product $hg$ for $h\in G_{m-1}$ and its restrictions $(hg)|_v=h|_{g(v)} g|_{v}$ for words
$v\in X^{*}$ of length $|v|\geq C_1\left(\log l(h)\right)^m$. We have the following cases. If $g|_v\in G_{m-1}$ then
$(hg)|_v\in G_{m-1}$. So further assume $g|_v\not\in G_{m-1}$, and hence $g|_v$ and $g$ lie on the same cycle. There
exists a nonempty word $u$ of length $|u|\leq L$ such that $g|_v|_u=g|_v$. Let us look at $h|_{g(v)}$, which either
belongs to $G_{m-2}$ or is circuit. If $h|_{g(v)}\in G_{m-2}$ then $(hg)|_v=h'g'$ for circuit $g'=g|_v$ and
$h'=h|_{g(v)}\in G_{m-2}$. Otherwise $h|_{g(v)}$ is circuit, and let $l\leq L$ be the length of the cycle at $h|_{g(v)}$.
Then the length of the word $u^l$ is a multiple of $|u|$ and of $l$. If $h|_{g(v)}|_{g(u^l)}=h|_{g(v)}$ then
$(h|_{g(v)}g|_v)|_{u^l}=h|_{g(v)}g|_v$ and the state $(hg)|_v$ is circuit; otherwise $h|_{g(v)}|_{g(u^l)}\in G_{m-2}$.
Hence in all cases the state $(hg)|_{w}$ for a word $w$ of length $|w|\geq C_1\left(\log l(h)\right)^m+L^2$ is either
circuit, or it belongs to $G_{m-1}$, or it is of the form $h'g'$ for circuit $g'$ and $h'\in G_{m-2}$ with $l(h')\leq
l(h)$. Eventually, in the worst case, we will need to consider the product $hg$ for circuit $g$ and $h\in\nucl\subset
S_0$. In this case we can apply the same arguments as in the proof of Lemma~\ref{lemma_cycles_are_loops_generalized}.

The rest of the proof is the same.
\end{proof}

Now we can return to the growth of Schreier graphs. If $m=0$ then the group $G$ is contracting and the statement holds.
We suppose by induction that the statement holds for the group $G_{m-1}$ with some constant $C_1$.

Take a sequence $w=x_1x_2\ldots\in\xo$ and consider the Schreier graph $\gr_w=\gr_w(G,S)$. We will use the estimate
\eqref{eqn_theorem_growth_estimate}.

Let $H_n=\{h\in G_{m-1}: l_{S_{m-1}}(h)\leq n\}$ be the ball of radius $n$ in the group $G_{m-1}$ with respect to its
generating set $S_{m-1}$.

Let $k$ be the least integer greater than $C(\log n)^{m+1}$ from Lemma~\ref{lemma_weak_contr_poly_generalized}.  Then for
every $g\in\nucl_{(n,k)}\setminus H_n$ there exists a nonempty word $u$ of length $|u|\leq L$ such that $g|_u=g$. Hence
\[
\nucl_{(n,k)}\subset \bigcup_{|u|\leq L} \nucl^u_{(n,k)} \cup H_n,
\]
where $\nucl^u_{(n,k)}=\{g\in\nucl_{(n,k)} : g|_u=g\}$. By induction hypothesis the size of the orbit $H_n(v)$ is not
greater than $|X|^{C_1(\log n)^m}$. Let us estimate the size of the orbits $\nucl_{(n,k)}^u(v)$. If
$v=uu\ldots=u^{\infty}$ then for every $g\in\nucl_{(n,k)}^u$ we get $g(v)=z^{\infty}$ for some $z\in X^{|u|}$. If
$v=u^lu_1v_1$ for $u_1\in X^{|u|}$, $u_1\neq u$, and $l\geq 0$, then for every $g\in\nucl_{(n,k)}^u$ we get
$g(v)=z^lz_1h(v_1)$ for some $z,z_1\in X^{|u|}$ and $h\in H_n$. Summarizing all estimates we get
\[
|B(w,n)|\leq |X|^{C(\log n)^{m+1}+1}\cdot |X|^{1+2+\ldots+L}\cdot|X|^{2L}\cdot |X|^{C_1(\log n)^m}\leq |X|^{C_2(\log
n)^{m+1}}
\]
for some constant $C_2$.
\end{proof}

\section{Examples}\label{section_Examples}

\begin{figure}
\begin{center}
\psfrag{a}{$a$} \psfrag{id}{$e$} \psfrag{a1}{$a_1$} \psfrag{a2}{$a_2$} \psfrag{ak}{$a_k$} \psfrag{1|0}{$1|0$}
\psfrag{0|1}{$0|1$} \psfrag{0|0}{$0|0$} \psfrag{1|1}{$1|1$} \psfrag{x1}{$x_1|x_1$} \psfrag{x2}{$x_2|x_2$}
\psfrag{xk}{$x_k|x_k$} \psfrag{y1}{$\overline{x}_1|\overline{x}_1$} \psfrag{y2}{$\overline{x}_2|\overline{x}_2$}
\psfrag{y3}{$\overline{x}_3|\overline{x}_3$} \psfrag{yk}{$\overline{x}_k|\overline{x}_k$}
\epsfig{file=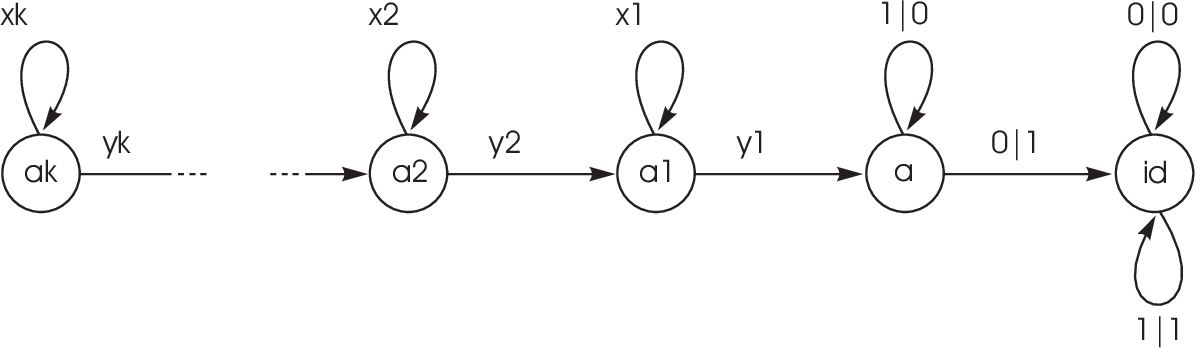,width=380pt} \caption{The automaton $\A_v$ for $v=x_1x_2\ldots x_k$}\label{fig_AutomatonOmega}
\end{center}
\end{figure}

\textbf{Omega-periodic graphs}. Let $X=\{0,1\}$ be the binary alphabet. For every finite word $v=x_1x_2\ldots x_k$ over
$X$ consider the automaton $\A_v$ shown in Figure~\ref{fig_AutomatonOmega}, where we use notation $\overline{0}=1$ and
$\overline{1}=0$. Every automaton $\A_v$ is polynomial of degree $k=|v|$. The automaton $\A_v$ is a subautomaton of
$\A_{vx}$, and we get an increasing chain of polynomial automata
\[
\A_{\emptyset}\subset\A_{x_1}\subset\A_{x_1x_2}\subset\ldots,
\]
for every $x_i\in X$. Hence every orbital Schreier graph $\gr_w(\A_v)$ for $w\in\xo$ is a subgraph of $\gr_w(\A_{vx})$.
This allows us to construct the Schreier graph $\gr_w(\A_v)$ consequently as $\gr_w(\A_{\emptyset})\subset
\gr_w(\A_{x_1})\subset \ldots\subset \gr_w(\A_{x_1x_2\ldots x_k})$ by looking at the action of the states $a,
a_1,a_2,\ldots,a_k$.

We start from the orbital Schreier graphs $\gr_w(\A_{\emptyset})$ of the automaton $\A_{\emptyset}$. The transformation
$a$ is called the adding machine, because its action on a sequence $y_1y_2\ldots\in\xo$ corresponds to the addition of
$1$ to the binary number $\sum_{i\geq 1} y_i2^{i-1}\in\mathbb{Z}_{2}$. In particular, the infinite cyclic group generated
by $\A_{\emptyset}$ acts faithfully on its every orbit on $\xo$. Hence every Schreier graph $\gr_w(\A_{\emptyset})$ for
$w\in\xo$ is a \textquotedblleft line\textquotedblright, i.e., its vertices can be identified with $\mathbb{Z}$ via the
map $a^m(w)\mapsto m$ and the edges are $(m,m+1)$ for all $m\in\mathbb{Z}$.

Notice that for every state $a_k$ of the automaton $\A_v$ if $a_k(y_1y_2\ldots)=z_1z_2\ldots$, for $y_i,z_i\in X$, and we
take the first position $n$ with $y_n\neq z_n$, then $a_k|_{y_1y_2\ldots y_{n-1}}=a$ and
$a(y_ny_{n+1}\ldots)=z_{n}z_{n+1}\ldots$. It follows that each $a_k$ preserves the orbits of the action of $a$ on $\xo$.
Hence the orbits of $(\A_v,\xo)$ for every word $v$ coincide with the orbits of $(\A_{\emptyset},\xo)$, and we can
identify the vertex set of every Schreier graph $\gr_w(\A_v)$ for $w\in\xo$ with the set $\mathbb{Z}$.

Every state $a_k$ of the automaton $\A_{x_1x_2\ldots x_k}$ acts on the infinite sequences as follows
\begin{equation}\label{eqn_exampl_OmegPeriod_ActionAk}
a_k(x_k^{n_k}\,\overline{x}_k \ldots x_2^{n_2}\,\overline{x}_2\,  x_1^{n_1}\,\overline{x}_1 w)=x_k^{n_k}\,\overline{x}_k
\ldots x_2^{n_2}\,\overline{x}_2\,  x_1^{n_1}\,\overline{x}_1 a(w)
\end{equation}
for every $w\in\xo$ and $n_i\in \mathbb{N}\cup \{0,\infty\}$. Notice that every sequence from $\xo$ appears in
\eqref{eqn_exampl_OmegPeriod_ActionAk} for suitable numbers $n_i$. Further, for every fixed $n_1,\ldots, n_k$, the edges
defined by \eqref{eqn_exampl_OmegPeriod_ActionAk}, when $w$ runs through $\xo$, are periodic, namely, these edges are
invariant under the shift $m\mapsto m+2^{n_1+\ldots +n_k+k}$ on $\mathbb{Z}$. It follows that all Schreier graphs
$\gr_w(\A_v)$ for $w\in\xo$ are unions of periodic subgraphs on $\mathbb{Z}$, and hence they are $\omega$-periodic in the
terminology of \cite{omega_periodic}.

\begin{figure}
\begin{center}
\epsfig{file=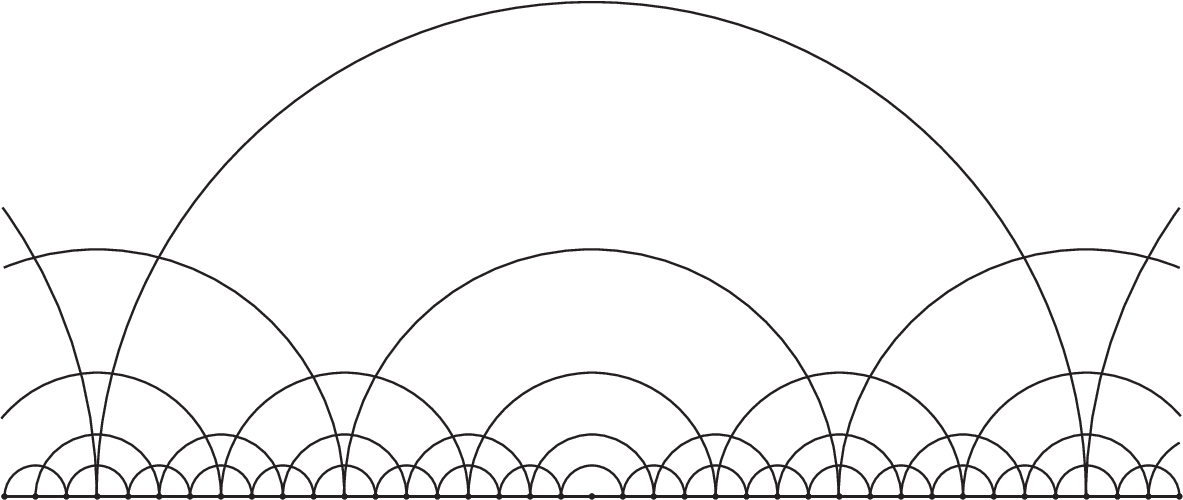,height=180pt} \caption{The Schreier graph $\gr_{0^{\infty}}(\A_0)$}\label{fig_OmegaGraph}
\end{center}
\end{figure}

The Schreier graph $\gr_{0^{\infty}}(\A_0)$ (see Figure~\ref{fig_OmegaGraph}) was defined in \cite{omega_periodic} as an
example of a graph of intermediate growth (its growth is equivalent to $n^{\log_4 n}$) connected to the long range
percolation on $\mathbb{Z}$. All orbital Schreier graphs $\gr_w(\A_0)$ for $w\in\xo$ of the automaton $\A_0$ were studied
in \cite{SchreierInter}, where it was proved that the family of these graphs contains uncountably many pairwise
nonisomorphic graphs, all of them except $\gr_{0^{\infty}}$ are locally isomorphic, and all have intermediate growth. The
groups generated by $\A_0$ and $\A_1$ are the same, and the automata $\A_0, \A_1$ are just different generating sets of
this group. Hence the Schreier graphs $\gr_w(\A_1)$ also have intermediate growth. The Schreier graphs $\gr_w(\A_v)$ for
every nonempty word $v$ have growth not less than the growth of $\gr_w(\A_0)$ or of $\gr_w(\A_1)$. Hence their growth is
superpolynomial, and then Theorem~\ref{theorem_main} implies that all orbital Schreier graphs $\gr_w(\A_v)$ for $w\in\xo$
and $v\neq\emptyset$ have intermediate growth. Let us make more precise estimates to show that the upper bound in
Theorem~\ref{theorem_main} is sharp for groups generated by polynomial automata of degree $m$.

\begin{theorem}
For every $m\in\mathbb{N}$ there exist constants $A\geq B>1$ such that all orbital Schreier graphs $\gr_w(\A_{0^m})$ for
$w\in X^{\omega}$ have intermediate growth between $A^{(\log n)^{m+1}}$ and $B^{(\log n)^{m+1}}$.
\end{theorem}
\begin{proof}
The upper bound follows from Theorem~\ref{theorem_main}. Let us prove the lower bound.

Consider the graph $\gr_{0^{\infty}}(\A_{0^m})$ and its subgraph $\gr(m,n)$ induced by the set of vertices
$X^n0^{\infty}$. Notice that the graph $\gr(m,n)$ embeds in every Schreier graph $\gr_w(\A_{0^m})$ for $w\in X^{\omega}$.
Indeed, if $a_i(v0^{\infty})=u0^{\infty}$ for different words $v,u\in X^n$ then $a_i|_{v}=1$ and $a_i(vw_1)=uw_1$ for
every $w_1\in X^{\omega}$. Hence, if $w=v_1w_1$ for $v_1\in X^n$ and $w_1\in X^{\omega}$ then the map $v0^{\infty}\mapsto
vw_1$ is an embedding of the graph $\gr(m,n)$ into the graph $\gr_w(\A_{0^m})$. It follows that the ball $B(w,r)$ of
radius $r=\diam \gr(m,n)$ in every orbital Schreier graph $\gr_w(\A_{0^m})$ contains at least $2^n$ vertices, where
$\diam \gr$ is the diameter of the graph $\gr$. If we prove that $\diam \gr(m,n)\leq C^{n^{\frac{1}{m+1}}}$ for all
$n\geq 1$ with some constant $C$ depending only on $m$, then we get the necessary lower bound on the growth of every
Schreier graph $\gr_w(\A_{0^m})$ for $w\in X^{\omega}$. Equivalently, one can consider the approximation of orbital
Schreier graphs $\gr_w(\A_{0^m})$ by finite Schreier graphs $\gr_n(\A_{0^m})$ and prove that the diameter of
$\gr_n(\A_{0^m})$ is not greater than $C^{n^{\frac{1}{m+1}}}$ for some constant $C=C(m)$.

Let $d_m(n)=\diam \gr(m,n)$. For some $k\in\{1,2,\ldots,n-1\}$ to be chosen later consider the subgraph $\Lambda$ of
$\gr(m,n)$ induced by the set of vertices $0^{k-1}1X^{n-k}0^{\infty}$. Notice that
$a_i(0^{k-1}1v0^{\infty})=0^{k-1}1u0^{\infty}$ for different words $v,u\in X^{n-k}$ if and only if
$a_{i-1}(v0^{\infty})=u0^{\infty}$. Hence the map $0^{k-1}1v0^{\infty}\mapsto v0^{\infty}$ is an isomorphism between the
graph $\Lambda$ and the graph $\gr(m-1,n-k)$. Then the diameter of $\Lambda$ is equal to $d_{m-1}(n-k)$. Using periodic
structure of the graph $\gr(n,m)$ it is easy to see that the ball in $\gr(m,n)$ of radius $r=d_m(k)$ around every vertex
$0^{k-1}1v0^{\infty}$ for $v\in X^{n-k}$ contains all vertices from the set $X^kv0^{\infty}$. Hence the union of these
balls cover the whole graph $\gr(m,n)$. It follows that
\begin{equation}\label{eqn_inequality_dm(n)}
d_m(n)\leq 2d_m(k) + d_{m-1}(n-k) %\ \mbox{ for every } \ k=1,2,\ldots, n-1.
\end{equation}
Based only on this inequality and the initial conditions $d_0(n)=2^{n}-1\leq 2^n$, $d_m(1)=1$, $n,m\geq 1$, we will make
an upper bound on $d_m(n)$ using the same arguments as in \cite{omega_periodic}.

For every $m\geq 0 $ let us define the sequence $\{y_m(t)\}_{t\in\mathbb{N}}$ recursively as follows. Put $y_0(t)=t$ for
all $t\geq 1$ and for every $m\geq 1$ define
\begin{eqnarray*}
y_m(1)=1 \ \mbox{ and } \  y_m(t)=y_m(t-1)+y_{m-1}(t-1), t\geq 2.
\end{eqnarray*}
In order to estimate the values of $y_m(t)$ and $d_m(y_m(t))$ we will use the following basic inequality
\[
\frac{1}{m+1}t^{m+1}\leq 1^m+2^m+\ldots+t^m\leq (t+1)^{m+1}.
\]
Assume inductively that $at^m\leq y_{m-1}(t)\leq t^m$ for all $t\geq 1$ with some constant $a>0$ (depending only on $m$).
Then
\begin{eqnarray*}
y_m(t)&\leq& 1^m+\ldots+(t-2)^m+(t-1)^m\leq t^{m+1},\\
y_m(t)&\geq& a(1^m+\ldots+(t-2)^m+(t-1)^m)\geq \frac{a}{m+1}(t-1)^{m+1}.
\end{eqnarray*}
It follows that there exists a constant $b=b(m)>0$ such that $bt^{m+1}\leq y_m(t)\leq t^{m+1}$ for all $t\geq 1$.

To estimate $d_m(y_m(t))$ we use Equation~(\ref{eqn_inequality_dm(n)}) with $k=y_m(t-1)$. By definition
$d_0(y_0(t))=d_0(t)\leq 2^t$ and let us suppose by induction on $m$ that $d_m(y_m(t))\leq t^{m}2^{t}$ for all $t\geq 1$.
Then
\begin{eqnarray*}
d_{m+1}(y_{m+1}(t))&\leq& 2d_{m+1}(y_{m+1}(t-1)) + d_m(y_m(t-1))\leq\\
&\leq& 2d_{m+1}(y_{m+1}(t-1))+(t-1)^m2^{t-1}\leq\\
&\leq& 2d_{m+1}(y_{m+1}(t-2))+(t-2)^m2^{t-1}+(t-1)^m2^{t-1}\leq\\
&\leq& 2^{t-1}(1^m+2^m+\ldots +(t-1)^m)\leq t^{m+1}2^{t-1}\leq t^{m+1}2^{t}
\end{eqnarray*}
for all $t\geq 1$. The estimates on the sequence $y_m(t)$ and the upper bound on $d_m(y_m(t))$ imply that there exists a
constant $C=C(m)$ such that $d_m(n)\leq C^{n^{\frac{1}{m+1}}}$ for all $n\geq 1$.
\end{proof}

It is worse to notice that if we take the automaton shown in Figure~\ref{fig_AutomatonOmega} and change the labels of the
edges passing from the states $a_1,\ldots,a_k$ in any way we want, we still get an automaton whose all Schreier graphs
$\gr_w$ have intermediate growth. More generally, if we take any polynomial automaton $\A$ over $X$ that contains
$\A_{0}$ (with any labels at $a_1$) as a subautomaton, then all orbital Schreier graphs $\gr_w(\A)$ have intermediate
growth.

\vspace{0.2cm} \noindent\textbf{Hanoi graphs}. The Hanoi Tower Game on $k$ pegs is played with $n$ disks of distinct size
placed on $k$ pegs, $k\geq 3$. Initially, all disks are placed on the first peg according to their size so that the
smallest disk is at the top, and the largest disk is at the bottom. A player can move only one top disk at a time from
one peg to another peg, and can never place a bigger disk over a smaller disk. The goal of the game is to transfer the
disks from the first peg to another peg. More generally, one can take any two configurations of disks and ask a player to
transform one configuration to another one, where a configuration is any arrangement of $n$ disks on $k$ pegs such that a
bigger disk does not lie on a smaller disk. For more information about this game, its history, solutions, and open
problems, we refer the reader to \cite{hinz:hanoi,frame-stewart,FrStConjecture,gri_sunik:hanoi} and the references
therein.

The Hanoi Tower Game is modeled by the Hanoi graphs $H_n^{(k)}$. The vertices of the graph $H_n^{(k)}$ are the
configurations of $n$ disks on $k$ pegs, and two configurations are adjacent if one can pass from one to another by a
single disk move. Then the Hanoi Tower Game is equivalent to finding a path in the graph $H_n^{(k)}$ between two given
vertices. In particular, the diameter of the Hanoi graph gives an upper bound on the optimal number of steps in the Hanoi
Tower Game.

\begin{figure}
\begin{center}
\psfrag{a12}{$a_{(12)}$} \psfrag{a13}{$a_{(13)}$} \psfrag{a14}{$a_{(14)}$} \psfrag{a23}{$a_{(23)}$}
\psfrag{a24}{$a_{(24)}$} \psfrag{a34}{$a_{(34)}$} \psfrag{id}{$e$} \psfrag{1|1}{$1|1$} \psfrag{2|2}{$2|2$}
\psfrag{3|3}{$3|3$} \psfrag{4|4}{$4|4$} \psfrag{1|2}{$1|2$} \psfrag{2|1}{$2|1$} \psfrag{1|3}{$1|3$} \psfrag{3|1}{$3|1$}
\psfrag{1|4}{$1|4$} \psfrag{4|1}{$4|1$} \psfrag{2|3}{$2|3$} \psfrag{3|2}{$3|2$} \psfrag{2|4}{$2|4$} \psfrag{4|2}{$4|2$}
\psfrag{3|4}{$3|4$} \psfrag{4|3}{$4|3$}\epsfig{file=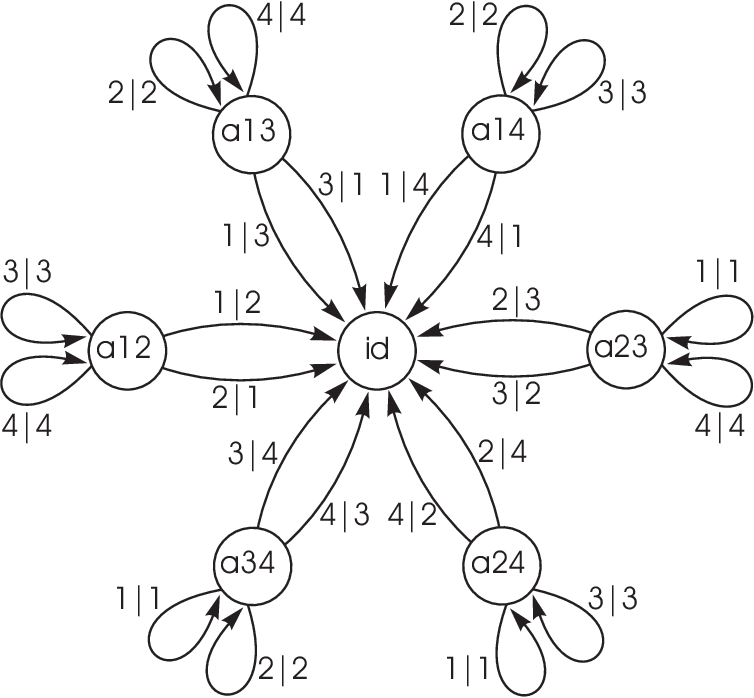,width=260pt} \caption{The Hanoi Towers automaton
$\A_{(4)}$ on $4$ pegs}\label{fig_AutomHanoi}
\end{center}
\end{figure}

It was noticed in \cite{gri_sunik:hanoi} that the game can be also modeled by a finite automaton $\A_{(k)}$ over the
alphabet $X=\{1,2,\ldots,k\}$, called the Hanoi Towers automaton on $k$ pegs. In this model, the pegs are identified with
the letters of the alphabet $X$, and the disks are labeled by $1,2,\ldots, n$ according to their size. Then the
configurations of $n$ disks on $k$ pegs can be encoded by words of length $n$ over $X$, where the word $x_1x_2\ldots x_n$
corresponds to the unique configuration in which the disk $i$ is placed on the peg $x_i$. The automaton $\A_{(k)}$ has
the trivial state $e$ and the state $a_{(ij)}$ for every transposition $(i,j)$ on $X$ with two arrows
$a_{(ij)}\rightarrow e$ labeled by $i|j$ and $j|i$, and the loop at $a_{(ij)}$ labeled by $x|x$ for every $x\in
X\setminus\{i,j\}$. For example, the automaton $\A_{(4)}$ is shown in Figure~\ref{fig_AutomHanoi} (the loops at the
trivial state $e$ are not drawn). The action of the state $a_{(ij)}$ on a word of length $n$ corresponds to a single disk
move between the pegs $i$ and $j$. Hence the Schreier graph $\gr_n(\A_{(k)})$ is precisely the Hanoi graph $H_n^{(k)}$.

The complexity of the Hanoi Tower Game highly depends on whether $k=3$ or $k\geq 4$. In the case $k=3$ the Hanoi graph
$H_n^{(3)}$ has diameter $2^n-1$, and this is the smallest number of steps to win the game. The automaton $\A_{(3)}$ is
bounded and generates a contracting group. All orbital Schreier graphs $\gr_w(\A_{(3)})$ for $w\in\xo$ have polynomial
growth of degree $\frac{\log 3}{\log 2}$.

For $k\geq 4$ the Hanoi Tower Game has subexponential complexity. Namely, it can be
solved in $2^{(1\pm o(1))(n(k-2)!)^{\frac{1}{k-2}}}$ moves using the Frame-Stewart
algorithm (see \cite{frame-stewart}) and
 this is asymptotically the smallest number of moves to win the game (see \cite{FrStConjecture}). In particular, the diameter of the Hanoi graph
$H_n^{(k)}$ and the Schreier graph $\gr_n(\A_{(k)})$ is asymptotically $\exp(n^{\frac{1}{k-2}})$. One can apply this
asymptotic estimate to the orbital Schreier graphs $\gr_w(\A_{(k)})$ for $w\in\xo$. It is easy to see that the growth
function $\gamma$ of each graph $\gr_w(\A_{(k)})$ satisfies $\gamma(d_n)\geq k^n$, where $d_n$ is the diameter of the
graph $\gr_n(\A_{(k)})$. Hence the upper bound on the diameter of $\gr_n(\A_{(k)})$ implies a superpolynomial lower bound
on the growth of $\gr_w(\A_{(k)})$, namely $\gamma(m)\geq k^{C(\log m)^{k-2}}$ with some constant $C>0$. This was used in
\cite[Theorem~2.1]{gri_sunik:hanoi} to conclude that the Schreier graph $\gr_{0^{\infty}}(\A_{(k)})$ for $k\geq 4$ has
intermediate growth between $a^{(\log m)^{k-2}}$ and $b^{(\log m)^{k-2}}$ for some constants $b>a>1$.

The automaton $\A_{(k)}$ for $k\geq 4$ is not polynomial, and does not satisfy the assumptions of
Theorem~\ref{theorem_growth_Schr_general}. However, we can apply similar ideas to give the subexponential upper bound
$k^{C(\log m)^{k-2}}$ on the growth of every Schreier graph $\gr_w(\A_{(k)})$ for $w\in\xo$.

\begin{theorem}
All orbital Schreier graphs $\gr_w(\A_{(k)})$ for $k\geq 4$ and $w\in\xo$ have intermediate growth.
\end{theorem}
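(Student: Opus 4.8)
The lower bound is already in hand: the paragraph preceding the statement extracts from the Frame--Stewart asymptotics $d_n\asymp\exp(n^{1/(k-2)})$ for the diameter of $\gr_n(\A_{(k)})$ a superpolynomial lower bound $\gamma(n)\ge k^{C(\log n)^{k-2}}$. So the real content is the matching subexponential upper bound $k^{C(\log n)^{k-2}}$, and the plan is to obtain it by the mechanism of Theorem~\ref{theorem_growth_Schr_general}: a weak contraction lemma feeding the orbit estimate \eqref{eqn_theorem_growth_estimate}, organized as an induction on the number of pegs $k$ in which each step peels off one peg. The base case $k=3$ is settled, since $\A_{(3)}$ is contracting and its Schreier graphs have polynomial growth.

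First I would fix the filtration by pegs. For $p\in X$ let $G_{\widehat p}$ be the subgroup generated by those $a_{(ij)}$ with $p\notin\{i,j\}$. The key elementary fact is the restriction rule of $\A_{(k)}$: a generator restricts only to itself or to $e$, with $a_{(ij)}|_u=a_{(ij)}$ precisely when $u$ avoids both letters $i,j$ (and then $a_{(ij)}$ fixes $u$), and $a_{(ij)}|_u=e$ as soon as $u$ meets $\{i,j\}$. In particular no generator of $G_{\widehat p}$ ever creates or destroys a letter equal to $p$, so deleting the $p$'s from a word identifies the Schreier graph of $G_{\widehat p}$ on any orbit with that of the $(k-1)$-peg Hanoi group on the orbit of the $p$-deleted word; the frozen disks on peg $p$ are simply transparent. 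Hence the induction hypothesis applies verbatim to each $G_{\widehat p}$, bounding its orbital balls by $k^{C_1(\log n)^{k-3}}$.

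The heart is the weak contraction lemma, the analogue of Lemma~\ref{lemma_weak_contr_poly_generalized}: a constant $C$ such that for $g$ of length $n$ and any prefix $v$ with $|v|\ge C(\log n)^{k-2}$, the induced restrictions $g|_{x_1\ldots x_{|v|}}$ (for $l(g)\le n$) are controlled by the subgroups $G_{\widehat p}$. Tracing the induced presentation from right to left, I would exploit that the processed word changes only at the steps where a generator dies (by one transposition of a single letter), that a surviving $a_{(ij)}$ forces both $i$ and $j$ to be absent from the current word, and hence that the survivors of any death-free epoch are transpositions among the pegs the word currently misses. Forcing the word to exhaust more letters confines the survivors to fewer pegs; iterating this one-peg reduction $\sim\log n$ times, each round consuming a prefix of length $\sim(\log n)^{k-3}$ exactly as the chain of circuit blocks is resolved in the proof of Lemma~\ref{lemma_weak_contr_poly_generalized}, lifts the exponent from $k-3$ to $k-2$. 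Splitting $\nucl_{(n,k)}$ over the $k$ choices of discarded peg and a residual circuit part, and feeding the per-$G_{\widehat p}$ bound into \eqref{eqn_theorem_growth_estimate} with $k$ the least integer above $C(\log n)^{k-2}$, then gives $|B(w,n)|\le k^{C(\log n)^{k-2}+1}\cdot k\cdot k^{C_1(\log n)^{k-3}}\le k^{C_2(\log n)^{k-2}}$.

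The main obstacle is precisely this contraction lemma. Unlike Theorem~\ref{theorem_growth_Schr_general}, the $k-2$ loops at each state $a_{(ij)}$ are non-disjoint cycles, so a restriction need be neither circuit nor contained in a single $G_{\widehat p}$: already a short prefix can interleave survivors from epochs missing different pairs of pegs (for $k=4$, a factor $a_{(12)}$ together with a factor $a_{(34)}$), so that $g|_v$ genuinely uses all $k$ pegs. The hard part will therefore not be to eliminate a peg outright but to show that the cost, measured in prefix length, of keeping a restriction active on all pegs simultaneously grows like $(\log n)^{k-2}$ — the automaton-theoretic shadow of the Frame--Stewart recursion behind the diameter estimate. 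Making this bookkeeping quantitative, with the correct power of $\log n$ at each reduction step, is where the work lies.
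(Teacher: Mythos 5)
Your architecture coincides with the paper's: the same lower bound extracted from the Frame--Stewart diameter asymptotics, the same filtration by the subautomata $\A_{(k)}^p$ of states fixing a peg $p$ (your $G_{\widehat p}$), the same induction on the number of pegs with the contracting three-peg group as base case, and the same final count through \eqref{eqn_theorem_growth_estimate}. But the one step you defer --- the contraction lemma with exponent $k-2$ --- is the entire content of the paper's proof, and the mechanism you sketch for it (``forcing the word to exhaust more letters confines the survivors to fewer pegs'') is not an argument that can be run: the word $v$ is arbitrary and need not exhaust anything; it can be nearly constant, say $v=x^j$, in which case no prefix length whatsoever kills or confines a block of generators from $\A_{(k)}^x$. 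So the ``one-peg reduction by exhaustion'' cannot be iterated as stated, and this failure mode is exactly where your own worry (a factor $a_{(12)}$ interleaved with a factor $a_{(34)}$) lives. You correctly identify what must be proved --- that keeping a restriction alive on all pegs costs $(\log n)^{k-2}$ letters --- but you do not supply the idea that proves it.

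The paper's idea is a dichotomy for a product of \emph{two} blocks, plus a transfer of information across the product. Let $g$ be a word in $\A_{(4)}^x$ and $h$ a word in $\A_{(4)}^y$, $x\neq y$, of total length $n$, and let $|v|\geq 2\log n$. Either $v$ contains at least $\log n$ letters different from $x$; then, since letters $x$ are transparent for $\A_{(4)}^x$, the inductive $(k-1)$-peg contraction collapses $g|_v$ into $\A_{(4)}$. Or $v$ contains at least $\log n$ letters equal to $x$; here the crucial point is that $g$ \emph{fixes every position carrying the letter $x$}, so $g(v)$ also contains at least $\log n$ letters equal to $x$, hence at least $\log n$ letters different from $y$, and now it is $h|_{g(v)}$ that collapses into $\A_{(4)}$. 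Thus within $2\log n+1$ letters one of any two blocks over distinct pegs always dies, whichever way $v$ behaves --- ``many non-$x$'s kill $g$, many $x$'s kill $h$''. Partitioning an arbitrary length-$n$ product into $l\leq n$ single-peg blocks and merging adjacent pairs, the number of blocks halves every $2\log n+1$ letters (for general $k$, every $C(\log n)^{k-3}$ letters, by the inductive lemma), and $\log l\leq\log n$ rounds give the exponent $k-2$; after that every element of $\nucl_{(n,k)}$ is a word over a single $\A_{(k)}^p$ and your final estimate goes through. This dichotomy-with-transfer is precisely the quantitative bookkeeping your proposal names as ``where the work lies''; without it, what you have is an accurate plan and a correct reduction, but not yet a proof.
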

\begin{proof}
To provide an upper bound we follow similar arguments as in the proofs on Theorems \ref{theorem_growth_Schreier_poly} and
\ref{theorem_growth_Schr_general}. We will show only the case $k=4$, the general case is analogous.

Let $G_{(k)}$ be the automaton group generated by $\A_{(k)}$. Notice that for every state $s$ of the automaton $\A_{(k)}$
and every word $v\in X^{*}$ if $s|_v\neq 1$ then $s(v)=v$ and $s|_v=s$. It immediately follows that every cycle in the
complete automaton $\A(G_{(k)})$ of the group $G_{(k)}$ is a loop labeled by $x|x$ for some letter $x\in X$.

The group $G_{(3)}$ is contracting with nucleus $\nucl=\A_{(3)}$. Then for every $g=s_1s_2\ldots s_n\in G_{(3)}$,
$s_i\in\A_{(3)}$, the restriction $g|_v$ belongs to $\nucl$ for all words $v$ of length $|v|\geq \log n$. Hence every
Schreier graph $\gr_w(\A_{(3)})$ has subexponential (polynomial) growth not greater than $4|X|^{\log n+1}\leq
|X|^{C_1\log n}$ for some constant $C_1$.

Now consider the group $G_{(4)}$ and the automaton $\A_{(4)}$. For $x\in X$ denote by $\A_{(4)}^x$ the subautomaton of
$\A_{(4)}$ consisting of states that fix letter $x$. The group generated by $\A_{(4)}^x$ acts on the words over
$X\setminus\{x\}$ in the same way as the group $G_{(3)}$ acts on $\{1,2,3\}^{*}$, and that is where we can apply the
inductive arguments. In particular, if $g=s_1s_2\ldots s_n$ for $s_i\in\A_{(4)}^x$ then for every word $v\in X^{*}$ that
contains at least $\log n$ letters different from $x$ the restriction $g|_v$ belongs to $\A_{(4)}$, in other words,
$g|_v$ fixes one more letter except $x$.

Take two different letters $x,y\in X$ and consider elements $g=s_1s_2\ldots s_k$ for $s_i\in \A_{(4)}^x$ and
$h=t_1t_2\ldots t_m$ for $t_i\in\A_{(4)}^y$. Take an arbitrary word $v\in X^{*}$ of length $|v|\geq 2 \log n$ with
$n=k+m$. If the word $v$ contains at least $\log n$ letters not equal to $x$ then $g|_v\in\A_{(4)}$. Otherwise, $v$
contains at least $\log n$ letters equal to $x$, and then $g(v)$ contains at least $\log n$ letters equal to $x$. Since
$x\neq y$, we get $h|_{g(v)}\in\A_{(4)}$. It follows that for every word of length $|v|\geq 2 \log n+1$ there exists a
letter $z\in X$ such that every generator in the induced presentation of $(hg)|_v=h|_{g(v)}g|_v$ belongs to $\A_{(4)}^z$
(i.e., $(hg)|_v$ is circuit).

Let us prove that there exists a constant $C_2$ such that for every product $g=s_1s_2\ldots s_n\in G_{(4)}$,
$s_i\in\A_{(4)}$, and every word $v\in X^{*}$ of length $|v|\geq C_2 (\log n)^2$ there exists $x\in X$ such that every
generator $s_i|_{v_i}$ in the induced presentation $g|_v=s_1|_{v_1} s_2|_{v_2}\ldots s_n|_{v_n}$ belongs to $\A_{(4)}^x$
(hence $g|_v$ is circuit, here $g|_v|_x=g|_v$). We partition the presentation $g=s_1s_2\ldots s_n$ on blocks
\begin{equation}\label{eqn_exam_Hanoi_g=g1g2...}
g=g_1g_2\ldots g_l
\end{equation}
such that every $g_i=s_{j_i}s_{j_i+1}\ldots s_{j_{i+1}-1}$ contains only generators from $\A_{(4)}^{x}$ for some letter
$x\in X$ (depending on $i$). Consider the products $g_1g_2$, $g_3g_4$, \ldots, and their restrictions on words $v$ of
length $\geq 2\log n+1$. Using the above property we get that the element $g|_v$ can be expressed as a product
\eqref{eqn_exam_Hanoi_g=g1g2...} with $\leq (l+1)/2$ elements $g_i$. Applying the same procedure $\log l$ times, we get
an element with needed properties. The existence of the constant $C_2$ follows.

Consider the Schreier graph $\gr_{w}(\A_{(4)})$ for $w=x_1x_2\ldots\in\xo$. We will use estimate
\eqref{eqn_theorem_growth_estimate} with $k$ being the least integer greater than $C_2(\log n)^2$. Then
\[
\nucl_{(n,k)}=\{g|_{x_1x_2\ldots x_k} : g\in G_{(4)} \mbox{ and } l(g)\leq n\}\subset \bigcup_{x\in X} \nucl_{(n,k)}^x,
\]
where $\nucl_{(n,k)}^x$ consists of those elements $g\in\nucl_{(n,k)}$ that can be expressed as a product of no more than
$n$ elements from $\A_{(4)}^x$. Every element $g\in\nucl_{(n,k)}^x$ fixes every occurrence of the letter $x$ in the
sequence $x_{k+1}x_{k+2}\ldots$, and changes every other letter in the same way as the group $G_{(3)}$ acts on
$X\setminus\{x\}$. Hence the orbit $\nucl_{(n,k)}^x(x_{k+1}x_{k+2}\ldots)$ has subexponential growth not greater than
$(|X|-1)^{C_1\log n}$. We get the upper bound
\[
|B(w,n)|\leq |X|^{C_2(\log n)^2+1}\cdot |X|\cdot (|X|-1)^{C_1 \log n} \leq |X|^{C(\log n)^2}
\]
for some constant $C$.
\end{proof}

\vspace{0.2cm} \noindent\textbf{Weakly contracting groups.} One of the main properties used in the proof of
Theorems~\ref{theorem_main} and \ref{theorem_growth_Schr_general} leads us to the following definition. We say that a
group $G$ generated by a finite automaton is \textit{weakly contracting} if the length of cycles in the complete
automaton $\A(G)$ of the group is bounded. Every finitely generated contracting group is weakly contracting. The groups
generated by polynomial automata, the groups from Theorem~\ref{theorem_growth_Schr_general}, and the Hanoi Towers groups
are also weakly contracting. It is natural to ask which properties can be generalized to this class of groups. Is the
word problem in weakly contracting groups solvable in subexponential time? Do not weakly contracting groups contain
non-abelian free subgroups? Do the orbital Schreier graphs $\gr_w$ of weakly contracting groups have subexponential
growth?

\end{document}